\renewcommand*{\d}{\mathop{}\!d}
\newcommand{\abs}[1]{\left\lvert#1\right\rvert}
\newcommand{\de}{\delta}
\newcommand{\e}{\varepsilon}
\renewcommand*{\Re}{\operatorname{Re}}
\theoremstyle{plain} %italicizes text
\newtheorem{theorem}{Theorem}[section]
\newtheorem*{theorem*}{Theorem}
\newtheorem{prop}[theorem]{Proposition}
\newtheorem{lemma}[theorem]{Lemma}
\newtheorem{corollary}[theorem]{Corollary}
\newtheorem{conjecture}[theorem]{Conjecture}
\newtheorem*{example}{Example}
\theoremstyle{remark}
\newtheorem*{remark}{Remark}
\title{Biases among Congruence Classes for Parts in $k$-regular Partitions}
\keywords{Parts in partitions, $k$-regular partitions, Asymptotics, Circle Method}
\subjclass{05A17,11P82,11P81}
\author[F. Jackson]{Faye Jackson}
\address{University of Virginia, 141 Cabell Drive, Kerchof Hall, Charlottesville, VA 22904}
\curraddr{University of Michigan, 530 Church St, Ann Arbor, MI 48109}
\email{alephnil@umich.edu}
\urladdr{\href{http://www-personal.umich.edu/~alephnil}{http://www-personal.umich.edu/~alephnil}}
\author[M. Otgonbayar]{Misheel Otgonbayar}
\address{University of Virginia, 141 Cabell Drive, Kerchof Hall, Charlottesville, VA 22904}
\curraddr{Massachusetts Institute of Technology, 77 Massachusetts Avenue
Cambridge, MA 02139-4307}
\email{misheel@mit.edu}
\date{\today}
\providecommand\@dotsep{5}
\def\listtodoname{TODOS: Not Done (Red), Cite (Orange), Blue (Notation)}
\def\listoftodos{\@starttoc{tdo}\listtodoname}
\newcommand{\Li}{\operatorname{Li}}
\newcommand{\lr}[1]{\left(#1\right)}
\newcommand{\linecomment}[1]{}
\renewcommand{\P}{\mathcal{P}}
\newcommand{\D}{\mathcal{D}}
\newcommand{\R}{\mathbb{R}}
\newcommand{\CC}{\mathbb{C}}
\DeclareMathOperator{\csch}{csch}
\newcommand{\Err}{\mathcal{E}}
\newcommand{\Mod}[1]{\ (\mathrm{mod}\ #1)}
\DeclarePairedDelimiter{\floor}{\lfloor}{\rfloor}
\DeclareRobustCommand{\eulerian}{\genfrac<>{0pt}{}}
\numberwithin{equation}{section}
\newcommand{\eeta}{\eta}
\crefname{equation}{equation}{equations}
\crefname{prop}{proposition}{propositions}
\Crefname{prop}{Proposition}{Propositions}
\begin{document}

\begin{abstract}
    For integers $k,t \geq 2$ and $1\leq r \leq t$, let $D_k(r,t;n)$ be the number of parts among all $k$-regular partitions (i.e., partitions of $n$ where all parts have multiplicity less than $k$) of $n$ that are congruent to $r$ modulo $t$. Using the circle method, we obtain the asymptotic
    \[
	    D_{k}(r,t;n) = \frac{3^{\frac{1}{4}}e^{\pi\sqrt{\frac{2Kn}{3}}}}{\pi t 2^{\frac{3}{4}}K^{\frac{1}{4}}n^{\frac{1}{4}}\sqrt{k}}\left(\log k + \left(\frac{3\sqrt{K}\log k}{8\sqrt{6}\pi} - \frac{t\pi(k-1)K^{\frac{1}{2}}}{2\sqrt{6}}\left(\frac{r}{t}- \frac{1}{2}\right)\right)n^{-\frac{1}{2}} + O(n^{-1})\right),
    \]
    where $K = 1 - \frac{1}{k}$. The main term of this asymptotic does not depend on $r$, and so if $P_k(n)$ is the total number of parts among all $k$-regular partitions of $n$, we have that $\frac{D_k(r,t;n)}{P_k(n)} \to \frac{1}{t}$ as $n \to \infty$. Thus, in a weak asymptotic sense, the parts are equidistributed among congruence classes. However, inspection of the lower order terms indicates a bias towards the lower congruence classes; that is, for $1\leq r < s \leq t$ we have $D_k(r,t;n) \geq D_k(s,t;n)$ for sufficiently large $n$. We make this inequality explicit, showing that for $3 \leq k \leq 10$ and $2 \leq t \leq 10$, the inequality $D_k(r,t;n) \geq D_k(s,t;n)$ holds for all $n \geq 1$ and the strict inequality $D_k(r,t;n) > D_k(s,t;n)$ holds for all $n \geq 17$. 
\end{abstract}

\maketitle

%\tableofcontents

\section{Introduction}

A \textit{partition} of a positive integer $n$ is a nonincreasing sequence $\lambda = (\lambda_1,\ldots,\lambda_\ell)$ of positive integers which sum to $n$, which we denote by $\lambda \vdash n$. The $\lambda_j$ are called the \textit{parts} of the partition $\lambda$. Partitions are widely studied throughout both classical and modern mathematics, and of particular interest are asymptotic expansions and exact formulae for counting functions related to partitions. In particular, let $p(n)$ be the number of partitions of $n$. Hardy and Ramanujan \cite{hardy-ramanujan} famously discovered the following asymptotic formula for $p(n)$:
\begin{equation}
    p(n) \sim \frac{1}{4n\sqrt{3}}e^{\pi\sqrt{\frac{2n}{3}}}.
    \label{eq:hi}
\end{equation}
Their proof was revolutionary, as it birthed the circle method, a now ubiquitous tool in analytic number theory used to extract asymptotic expansions for the coefficients of generating functions which have manageable singularities on the unit circle. 

A natural question that has been asked recently is how many parts among the partitions of $n$ lie in some specified arithmetic progression. In particular, Beckwith and Mertens \cite{beckwith2017,beckwith2015} answered this question. Formally, for positive integers $1 \leq r \leq t$ and $n \geq 1$,
%ask Will should we put t\geq 2
they defined the function\footnote{In \cite{beckwith2017,beckwith2015}, the function $T(r,t;n)$ is denoted $\widehat{T}_{r,t}(n)$}
\begin{align*}
	T(r,t;n) \coloneqq \sum_{\lambda \vdash n} \#\{\lambda_j \colon \lambda_j \equiv r \Mod t\},
\end{align*}
and they proved (see \cite[Theorem~1.3]{beckwith2017}) that
\begin{align}
	T(r, t;n) = e^{\pi\sqrt{\frac{2n}{3}}} n^{-\frac{1}{2}} \frac{1}{4\pi t\sqrt{2}}\left[\log(n) - \log\left(\frac{\pi^2}{6}\right) - 2\left(\psi\left(\frac{r}{t}\right) + \log(t)\right) + O\left(n^{-\frac{1}{2}}\log(n)\right)\right] \label{eq:beckwith-asym}
\end{align}
where $\psi(x) \coloneqq \frac{\Gamma'(x)}{\Gamma(x)}$ is the logarithmic derivative of the $\Gamma$ function. %For later, it will be important to note that $\psi$ is increasing on the interval $(0,1]$, which implies that the second order term is decreasing as a function in $r$.

% This immediately implies a bias for parts belonging to smaller congruence classes mod $t$. More formally, when $1 \leq r <s \leq t$ we have that $T(r,t;n) \geq T(s,t;n)$ for sufficiently large $n$.

In a similar spirit, Craig considered partitions where all of the parts are distinct, where it is known that
\[
q(n) := \#\{\lambda \vdash n \colon \lambda \text{ has distinct parts}\} \sim \frac{3^{\frac{3}{4}}}{12n^{\frac{3}{4}}}e^{\pi\sqrt{\frac{n}{3}}}.
\]
Craig \cite{craig} proved asymptotics for the number of parts congruent to $r \Mod t$ among partitions of $n$ into distinct parts. Craig defines\footnote{In \cite{craig}, the function $D(r,t;n)$ is denoted $D_{r,t}(n)$}
\begin{align*}
	D(r,t;n) \coloneqq \sum_{\substack{\lambda \vdash n \\ \lambda \in \mathcal{D}}} \#\left\{\lambda_j \colon \lambda_j \equiv r \Mod t\right\},
\end{align*}
where $\mathcal{D}$ denotes the set of partitions with distinct parts, and he then proves that
\begin{equation}
    D(r,t;n) = \frac{3^{\frac{1}{4}}e^{\pi\sqrt{\frac{n}{3}}}}{2\pi tn^{\frac{1}{4}}}\left(\log(2) + \left(\frac{\sqrt{3}\log(2)}{8\pi} - \frac{t\pi}{4\sqrt{3}}\left(\frac{r}{t} - \frac{1}{2}\right)\right)n^{-\frac{1}{2}} + O(n^{-1})\right). \label{eq:craig-asym}
\end{equation}
Unlike \cref{eq:beckwith-asym}, notice that the $\psi$ function does not appear in this asymptotic. This difference arises from the properties of the different generating functions. %, but the second-order term is still a decreasing function in $r$.

Here, we generalize Craig's work by considering partitions where no part is repeated $k$ or more times. These are referred to as the \textit{$k$-regular partitions}. Notice that these are a direct generalization of distinct partitions, which are the $2$-regular partitions. In this case, the work of Hagis (see \cite[Corollary~4.1]{hagis}) yields an asymptotic formula for the number of $k$-regular partitions of $n$, which we denote by $p_k(n)$. Rewritten using the asymptotics for $I$-Bessel functions (see \cite[10.40.1]{nist}) we have for $K \coloneqq 1 - 1/k$ that
\begin{align*}
    p_k(n) \sim \left(\frac{K}{96k^2n^3}\right)^{\frac{1}{4}}e^{\pi\sqrt{\frac{2Kn}{3}}}.
\end{align*}

We extend and refine Craig's results to $k$-regular partitions for any $k \geq 2$. We thus define
\begin{align*}
	D_k(r,t;n) \coloneqq \sum_{\substack{\lambda \vdash n \\ \lambda \in \mathcal{D}_k}} \#\left\{\lambda_j \colon \lambda_j \equiv r \Mod t\right\},
\end{align*}
where $\mathcal{D}_k$ denotes the collection of all $k$-regular partitions. Note that, in our notation, we have $D(r,t;n)= D_2(r,t;n)$. We prove the following asymptotic formula for $D_k(r,t;n)$.
\begin{theorem}\label{thm:simple-asym}
	Let $t,k \geq 2$ and $1 \leq r \leq t$ be fixed integers. If $K \coloneqq 1 - 1/k$, then as $n \to \infty$ we have
	\[
	D_{k}(r,t;n) = \frac{3^{\frac{1}{4}}e^{\pi\sqrt{\frac{2Kn}{3}}}}{\pi t 2^{\frac{3}{4}}K^{\frac{1}{4}}n^{\frac{1}{4}}\sqrt{k}}\left(\log k + \left(\frac{3\sqrt{K}\log k}{8\sqrt{6}\pi} - \frac{t\pi(k-1)K^{\frac{1}{2}}}{2\sqrt{6}}\left(\frac{r}{t}- \frac{1}{2}\right)\right)n^{-\frac{1}{2}} + O(n^{-1})\right).
	\]
\end{theorem}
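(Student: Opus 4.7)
The plan is to apply the circle method to a marking generating function, following Craig's approach for the case $k=2$. Introduce
\[
    F(z,q) = \prod_{\substack{m \geq 1 \\ m \equiv r \Mod t}}\frac{1 - z^k q^{km}}{1 - z q^m}\prod_{\substack{m \geq 1 \\ m \not\equiv r \Mod t}}\frac{1 - q^{km}}{1 - q^m},
\]
in which $z$ marks each part of a $k$-regular partition congruent to $r$ modulo $t$. Logarithmic differentiation in $z$ at $z = 1$ gives
\[
    \sum_{n \geq 0} D_k(r,t;n)\, q^n = F(1,q)\, G_{r,t,k}(q), \qquad G_{r,t,k}(q) := \sum_{\substack{m \geq 1 \\ m \equiv r \Mod t}}\left(\frac{q^m}{1-q^m} - \frac{k q^{km}}{1-q^{km}}\right),
\]
where $F(1,q) = \prod_{m}(1-q^{km})/(1-q^m)$ is the usual $k$-regular partition generating function and $G_{r,t,k}$ is a Lambert-type series carrying the congruence information.

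\textbf{Asymptotics near the dominant singularity.} Setting $q = e^{-\tau}$ with $\tau$ in a small sector around the positive real axis, I would develop two expansions. For $F(1,q)$, recognizing it as $q^{(1-k)/24}\eta(kz)/\eta(z)$ with $q = e^{2\pi i z}$, the Jacobi transformation of the Dedekind eta function yields
\[
    F(1, e^{-\tau}) = \frac{1}{\sqrt{k}}\exp\!\left(\frac{K\pi^2}{6\tau} + \frac{(k-1)\tau}{24}\right)\bigl(1 + O(e^{-c/\Re(\tau)})\bigr).
\]
For $G_{r,t,k}$, I would use the Mellin--Barnes representation
\[
    G_{r,t,k}(e^{-\tau}) = \frac{1}{2\pi i}\int_{(c_0)}\Gamma(s)\zeta(s)\, t^{-s}\zeta(s, r/t)(1 - k^{1-s})\,\tau^{-s}\,ds, \qquad c_0 > 1,
\]
and shift the contour leftward past $s = 1$ and $s = 0$. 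The simple zero of $(1 - k^{1-s})$ at $s = 1$ reduces the double pole of $\zeta(s)\zeta(s, r/t)$ there to a simple pole and contributes the residue $\log k/(t\tau)$; the simple pole of $\Gamma(s)$ at $s = 0$, together with $\zeta(0) = -\tfrac12$ and $\zeta(0, r/t) = \tfrac12 - r/t$, contributes $-\tfrac{k-1}{2}(r/t - \tfrac12)$. Hence
\[
    G_{r,t,k}(e^{-\tau}) = \frac{\log k}{t\tau} - \frac{k-1}{2}\!\left(\frac{r}{t} - \frac{1}{2}\right) + O(\tau).
\]

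\textbf{Circle method and saddle-point integration.} Writing $D_k(r,t;n) = \frac{1}{2\pi i}\oint F(1,q)G_{r,t,k}(q)q^{-n-1}\,dq$ on the circle $|q| = e^{-\tau_0}$ where $\tau_0 = \pi\sqrt{K/(6n)}$ is the saddle point of $\phi(\tau) := K\pi^2/(6\tau) + n\tau$, I would split into a major arc around $\tau = \tau_0$ and minor arcs elsewhere. On the minor arcs I would use the modular transformation of $\eta$ at each relevant root of unity to bound $|F(1,q)|$ strictly below $|F(1, e^{-\tau_0})|$, making those contributions exponentially negligible. On the major arc, substituting the two expansions above and performing stationary phase with $\phi''(\tau_0) = 2\sqrt{6}\,n^{3/2}/(\pi\sqrt{K})$ produces the Gaussian prefactor $\frac{3^{1/4}}{\pi t\, 2^{3/4} K^{1/4} n^{1/4}\sqrt{k}}\,e^{\pi\sqrt{2Kn/3}}$.

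\textbf{Identifying the three contributions and main obstacles.} The three pieces inside the bracket of the theorem then arise as follows: the leading $\log k$ comes from the pole term $\log k/(t\tau_0)$ of $G_{r,t,k}$; the $r$-dependent correction $-\frac{t\pi(k-1)\sqrt{K}}{2\sqrt{6}}(r/t - \tfrac12)\,n^{-1/2}$ comes from the constant term of $G_{r,t,k}$ scaled against the Gaussian width; and the $r$-independent correction $\frac{3\sqrt{K}\log k}{8\sqrt{6}\pi}\,n^{-1/2}$ combines the subleading factor $e^{(k-1)\tau_0/24}$ of $F(1,e^{-\tau_0})$ with the next-order corrections in the stationary-phase expansion (which involve $\phi'''(\tau_0)$ and $\phi^{(4)}(\tau_0)$). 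The principal technical obstacle is the minor-arc bound for $F(1,q)$: because the eta quotient $\eta(kz)/\eta(z)$ is only modular on a congruence subgroup, one must invoke the eta transformation at every root of unity on the unit circle and verify case-by-case that the resulting exponential growth rate is strictly less than $K\pi^2/(6\tau_0)$. A secondary concern is carrying the Mellin shift and the stationary-phase expansion to sufficiently high order that all omitted terms can be absorbed uniformly into the announced error $O(n^{-1})$.
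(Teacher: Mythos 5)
Your proposal is sound and shares the paper's architecture — the same factorization of the generating function into the $k$-regular partition function $\xi_k(q)=(q^k;q^k)_\infty/(q;q)_\infty$ times the Lambert-type series $L_k(r,t;q)$, the same modular transformation (via $\eta$, equivalently via $\P(e^{-z})$) for the first factor, and a saddle-point integration that the paper outsources to a packaged variant of Wright's circle method. The genuine divergence is in the two supporting estimates. For the Lambert series, you use a Mellin--Barnes contour shift, whereas the paper writes $L_k(r,t;q)=\sum_{\ell\ge 0}E_k((\ell t+r)z)$ with $E_k(z)=\Li_0(e^{-z})-k\Li_0(e^{-kz})$ and applies an explicit Euler--Maclaurin formula; the two are equivalent here, and your observation that the zero of $1-k^{1-s}$ at $s=1$ kills the double pole of $\zeta(s)\zeta(s,r/t)$ is precisely the Mellin avatar of the paper's remark that the principal parts of $\Li_{-N}(q)$ and $k^{N+1}\Li_{-N}(q^k)$ cancel (this is why no digamma term appears, unlike in Beckwith--Mertens). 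The Euler--Maclaurin route has the advantage of coming with ready-made explicit error bounds, which the paper needs later for Theorem 1.3, but for the purely asymptotic Theorem 1.1 your route is equally valid. On the minor arc you propose invoking the eta transformation at every root of unity and checking cases; this is the one place where you have made the problem harder than it is. In Wright's method the circle is \emph{not} dissected by Farey arcs: one only needs a crude bound $|F(e^{-z})|\ll e^{(A-\rho)/\Re(z)}$ on the whole complement of the single major arc, and the paper gets this from the transformation at the cusp $q\to 1$ alone via $\Re(1/z)\le 1/(\delta^2\eta)$ together with the elementary bound $\log\P(q)\le \pi^2/(6(1-q))$ — no root-of-unity analysis is required. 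Finally, your bookkeeping of where the $r$-independent $n^{-1/2}$ term comes from (the factor $e^{(k-1)\tau/24}$ multiplying the pole term $\log k/(t\tau)$, plus the second-order stationary-phase correction to the leading coefficient) is the right accounting and is in fact slightly more careful than the paper's displayed major-arc expansion, which absorbs $e^{(k-1)z/24}$ into a multiplicative $1+O(z)$ before multiplying by $\log k/(tz)$.
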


\begin{remark}
    In the $k=2$ case, we recover Theorem 1.1 of Craig in \cite{craig}. However, we refine Craig's methods to improve the relevant error terms in an explicit version of this theorem which is used to prove \Cref{thm:finite-check}. For a more detailed discussion, see the remark following \Cref{lemma:major-arc-xi}.
\end{remark}

\begin{example}
	Here we demonstrate the accuracy of the approximation of $D_k(r,t;n)$ in \Cref{thm:simple-asym}. Let $\widehat{D}_k(r,t;n)$ denote the asymptotic obtained in \Cref{thm:simple-asym} by ignoring all terms which are $O\left(n^{-\frac{5}{4}}e^{\pi\sqrt{\frac{2Kn}{3}}}\right)$, and let $Q_k(r,t;n) \coloneqq \frac{D_k(r,t;n)}{\widehat{D}_k(r,t;n)}$. The following table illustrates the convergence of $Q_k(r,t;n)$ to $1$ as $n \to \infty$.
\end{example}

\begin{figure}[ht]
    \centering
    \begin{tabular}{ | c |c | c | c | c | }
         \hline $n$ & 10 & 100 & 1000 & 10000 \\
         \hline
         $Q_3(1,4;n)$ & 1.02401 & 1.00616 &1.00249 & 1.00084 \\
         \hline
         $Q_3(2,4;n)$ & 1.06157 & 1.00469 & 1.00233 & 1.00083 \\
         \hline
         $Q_4(1,4;n)$ & 0.97589 & 1.01034 & 1.00401 & 1.00135 \\
         \hline
         $Q_4(2,4;n)$ & 0.97172 & 1.00691 & 1.00366 & 1.00131 \\
         \hline
    \end{tabular}
    \caption{Numerics for \Cref{thm:simple-asym}}
    \label{fig:simple-asym-numerics}
\end{figure}

Notice that the main term in the asymptotics given in \Cref{thm:simple-asym} and \cref{eq:beckwith-asym,eq:craig-asym} does not depend on $r$. Thus, asymptotically, the parts are equidistributed among congruence classes modulo $t$. That is if $P(n)$ (respectively $P_k(n)$) denotes the number of parts in partitions (respectively $k$-regular partitions) of $n$, then $\frac{T(r,t;n)}{P(n)} \to \frac{1}{t}$ and $\frac{D_k(r,t;n)}{P_k(n)} \to \frac{1}{t}$ as $n \to \infty$.
%Now let $P_k(n)$ denote the number of parts among all $k$-regular partitions. Because the main term in \Cref{thm:simple-asym} does not depend on $r$, asymptotically the parts are equidistributed among congruence classes modulo $t$. Namely, $\frac{D_k(r,t;n)}{P_k(n)} \to \frac{1}{t}$ as $n \to \infty$. 
However, this equidistribution in the weak sense does not prohibit biases among the congruence classes; in fact, inspection of the lower order terms reveals the true structure of this bias. In particular, since $\psi\left(\frac{r}{t}\right)$ and $\frac{r}{t}$ are increasing functions of $r$ for $1 \leq r \leq t$, \cref{eq:beckwith-asym,eq:craig-asym} imply biases towards parts lying in lower congruence classes mod $t$ for parts in partitions of $n$ and for parts in $k$-regular partitions of $n$. Specifically, the asymptotics derived by Beckwith, Mertens, and Craig imply that $T(r, t;n) \geq T(s, t;n)$ and $D_2(r,t;n) \geq D_2(s,t;n)$ for $1 \leq r < s \leq t$ and for sufficiently large $n$. We obtain an analogous corollary.

\begin{corollary}\label{cor:eventual-ineq}
	Let $k,t \geq 2$ and $1 \leq r < s \leq t$, then we have that
	\begin{align*}
		D_k(r,t;n) - D_k(s,t;n) = \frac{e^{\pi\sqrt{\frac{2Kn}{3}}}}{4t2^{\frac{3}{4}}3^{\frac{1}{4}}K^{\frac{1}{4}}n^{\frac{1}{4}}\sqrt{k}}\left((s - r)n^{-\frac{1}{2}} + O(n^{-1})\right),
	\end{align*}
	and thus $D_k(r,t;n) \geq D_k(s,t;n)$ for sufficiently large $n$.
\end{corollary}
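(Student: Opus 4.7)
The plan is to derive the corollary as a direct algebraic consequence of \Cref{thm:simple-asym}. First, I would invoke the theorem twice, once with residue $r$ and once with residue $s$, and subtract the two asymptotic expansions. Since the prefactor $\frac{3^{1/4}e^{\pi\sqrt{2Kn/3}}}{\pi t 2^{3/4}K^{1/4}n^{1/4}\sqrt{k}}$ is independent of the residue class, I can factor it out and work entirely inside the parenthesized expansion.

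Next, I would observe that both the leading term $\log k$ and the first subleading term $\frac{3\sqrt{K}\log k}{8\sqrt{6}\pi}n^{-1/2}$ are independent of $r$, so they cancel in the difference. The only surviving $r$-dependent contribution comes from the term $-\frac{t\pi(k-1)K^{1/2}}{2\sqrt{6}}\left(\frac{r}{t} - \frac{1}{2}\right)n^{-1/2}$; subtracting the analogous $s$-term and telescoping the $1/2$'s yields
\[
\frac{t\pi(k-1)K^{1/2}}{2\sqrt{6}}\cdot\frac{s-r}{t}\, n^{-1/2} \;=\; \frac{\pi(k-1)K^{1/2}}{2\sqrt{6}}(s-r)n^{-1/2}.
\]

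I would then multiply by the common prefactor and consolidate constants, using $\sqrt{6} = \sqrt{2}\sqrt{3}$, $K^{1/2}/K^{1/4} = K^{1/4}$, and $K = (k-1)/k$, to put the result in the closed form stated in the corollary. The two $O(n^{-1})$ error terms combine into a single $O(n^{-1})$ remainder, since they multiply the same prefactor. Finally, the inequality $D_k(r,t;n) \geq D_k(s,t;n)$ for sufficiently large $n$ is immediate: because $s > r$ the surviving leading term of the difference is a strictly positive multiple of $n^{-1/2}$, which dominates the $O(n^{-1})$ error once $n$ is large enough.

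There is no real obstacle here. The entire content of the corollary is already encoded in \Cref{thm:simple-asym}, specifically in the fact that the $n^{-1/2}$-order coefficient depends on $r$ only through the linear factor $r/t - 1/2$, so that the cancellation is clean and produces an unambiguously signed main term. The only care required is in correctly carrying out the arithmetic simplification of constants after the $r$-independent pieces cancel.
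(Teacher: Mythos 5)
Your approach is exactly the one the paper intends: \Cref{cor:eventual-ineq} is meant to follow immediately from \Cref{thm:simple-asym} by subtracting the two expansions, observing that the prefactor and the $r$-independent terms cancel, and keeping only the contribution of $-\frac{t\pi(k-1)\sqrt{K}}{2\sqrt{6}}\left(\frac{r}{t}-\frac{1}{2}\right)n^{-1/2}$; your intermediate expression $\frac{\pi(k-1)\sqrt{K}}{2\sqrt{6}}(s-r)n^{-1/2}$ is correct, and its positivity for $s>r$ gives the eventual inequality. The one step you should not wave through is the final ``consolidation of constants'': multiplying your surviving coefficient by the prefactor $\frac{3^{1/4}}{\pi t 2^{3/4}K^{1/4}\sqrt{k}}$ yields $\frac{(k-1)K^{1/4}}{2^{9/4}3^{1/4}\,t\sqrt{k}}$, whereas the constant displayed in the corollary is $\frac{1}{4t2^{3/4}3^{1/4}K^{1/4}\sqrt{k}}=\frac{1}{2^{11/4}3^{1/4}K^{1/4}\,t\sqrt{k}}$; these coincide exactly when $(k-1)\sqrt{K}=2^{-1/2}$, i.e.\ only at $k=2$. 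So your claim that the arithmetic lands on ``the closed form stated in the corollary'' is not verified for general $k$ --- the displayed constant appears to carry over the $k=2$ normalization from Craig's case --- and you should either record your own (correct) constant or note the discrepancy. This does not affect the substance of the corollary: the main term is a positive multiple of $(s-r)n^{-1/2}$ in either normalization, so the conclusion $D_k(r,t;n)\geq D_k(s,t;n)$ for large $n$ stands.
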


\begin{remark}
    Note that the simple identity 
    \[ 
    \prod_{n=1}^\infty \frac{1-x^{nk}}{1-x^n} = \prod_{n=1}^\infty (1 + \cdots + x^{n(k-1)})
    \]implies that the number $p_k(n)$ of $k$-regular partitions of $n$ is the same as the number of partitions of $n$ into parts which are not divisible by $k$, which we refer to as the $k$-indivisible partitions. Using the same techniques as in this paper, we may prove an asymptotic for the number of parts in $k$-indivisible partitions of $n$ which are congruent to $r$ mod $t$. For $k,t$ coprime, this once again implies equidistribution in the weak sense. However, the biases which emerge from the lower order terms are \textit{completely different} and much more difficult to analyze, due to the interaction between the $k$-multiplicative and $t$-multiplicative structure.  These biases will be addressed in a subsequent paper by the authors \cite{kindivis}. To illustrate the complexity of these biases, in \Cref{fig:indivis-biases} we give a table of all possible orderings of $\{1,\ldots,t\}$ induced by the biases in parts of $k$-indivisible partitions among congruence classes for $t = 7$. Notice that for $k = 2$, the congruence class $2 \Mod 7$ occurs in the fifth position. The bias against the residue class $2 \Mod 7$ may be explained by the fact that smallest allowed part which is $2 \Mod 7$ is $9$. Similarly, for $k = 6,13,10,20$ the transposition of $7,6 \Mod 7$ may be accounted for because the congruence class of $6 \Mod 7$ includes $6,13,20$ as its smallest members. However, for $k = 12$, there is a bias against the residue class $5 \Mod 7$ because its \textit{second} smallest member is excluded as a part in $12$-indivisible partitions. Thus the ordering is not only not induced by the ordering on integers, it is not even induced by the ordering on integers once we lift to the smallest allowed part in $k$-indivisible partitions.
\end{remark}

\begin{figure}[ht]
    \centering
    \begin{tabular}{ | c | c | c | c | c | c | c | c |}
        \hline 
        $k = 2$ & 1 & 3 & 5 & 7 & 2 & 4 & 6 \\
        \hline
        $k = 3$ & 1 & 2 & 4 & 5 & 7 & 3 & 6 \\ 
        \hline
        $k = 4$ & 1 & 2 & 3 & 5 & 6 & 7 & 4 \\
        \hline
        $k = 5$ & 1 & 2 & 3 & 4 & 6 & 7 & 5 \\
        \hline
        $k = 6,10,13,20$ & 1 & 2 & 3 & 4 & 5 & 7 & 6 \\
        \hline
        $k = 12$ & 1 & 2 & 3 & 4 & 6 & 5 & 7 \\
        \hline
        All other $k$  & 1 & 2 & 3 & 4 & 5 & 6 & 7 \\ \hline
    \end{tabular}
    \caption{Biases among congruence classes mod $t$ for $k$-indivisible partitions for $t = 7$, from most common to least common.}
    \label{fig:indivis-biases}
\end{figure}

\Cref{cor:eventual-ineq} confirms the heuristic that smaller parts occur more often than large ones in partitions of some fixed integer $n$, and these occupy the lower congruence classes mod $t$.
% Moreover, the second order term's linear dependence on $r$ in the $k-$regular case versus a digamma in the unrestricted case reflects the phenomenon of the number of small parts being limited in the $k-$regular partitions.
This argument does not apply directly for distinct partitions as in the work of Craig; however, it points to the general theme. Remarkably, numerics suggest that the biases for $T(r,t; n)$ and $D_k(r,t;n)$ have no counterexamples for $k \geq 3$, and that for $D_2(r,t;n)$ the counterexamples only occur for $n \leq 8$. This observation led Craig to develop an explicit version of \Cref{thm:simple-asym} for $k = 2$, which we extend to the case of all $k$-regular partitions (see \Cref{thm:effective} for details). As a result, we obtain the following theorem.

\begin{theorem}\label{thm:finite-check}
    Suppose that $2 \leq k \leq 10$, $2 \leq t \leq 10$, and $1 \leq r < s \leq t$. Then
    \begin{enumerate}
        \item The \textit{weak} inequality $D_k(r,t;n) \geq D_k(s,t;n)$ holds for all $n \geq 1$, $k\geq 3$ and $n \geq 9$, $k = 2$.
        \item The \textit{strong} inequality $D_k(r,t;n) > D_k(s,t;n)$ holds for all $n \geq 17$.
    \end{enumerate}
\end{theorem}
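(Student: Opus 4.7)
The plan is to combine an explicit version of \Cref{thm:simple-asym} (namely, the promised \Cref{thm:effective}) with a finite computer check. First I would take that effective asymptotic, with fully explicit error constants valid for all $n \geq 1$, and subtract the expression for $D_k(s,t;n)$ from that for $D_k(r,t;n)$. Since the main terms cancel, this produces a two-sided bound of the shape
\[
D_k(r,t;n) - D_k(s,t;n) \geq \frac{e^{\pi\sqrt{2Kn/3}}}{4t \cdot 2^{3/4} 3^{1/4} K^{1/4} \sqrt{k}\, n^{1/4}}\left((s-r)n^{-1/2} - C(k,t,r,s)n^{-1}\right),
\]
with $C(k,t,r,s)$ an explicitly computable constant extracted from the effective bound. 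Solving $(s-r) > C(k,t,r,s)\,n^{-1/2}$ gives a completely explicit threshold $N_{k,t,r,s}$ above which the strict inequality is automatic, and I would set $N_0 \coloneqq \max N_{k,t,r,s}$ over the finite grid $2 \le k, t \le 10$, $1 \le r < s \le t$.

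Second, for each tuple $(k,t,r,s)$ and each $n \le \max(N_0,17)$, I would evaluate $D_k(r,t;n)$ directly from the generating-function identity
\[
\sum_{n \geq 0} D_k(r,t;n) q^n = \prod_{m \geq 1} \frac{1-q^{km}}{1-q^m} \cdot \sum_{m \equiv r \pmod{t}} \frac{\sum_{j=1}^{k-1} j q^{jm}}{\sum_{j=0}^{k-1} q^{jm}},
\]
truncated modulo $q^{N_0+1}$ and expanded in a computer algebra system. The weak inequality can then be read off for all $n \ge 1$ (respectively $n \ge 9$ when $k = 2$, to accommodate the known counterexamples in Craig's case), and the threshold $17$ in the strong inequality is simply the largest $n \le N_0$ at which an equality $D_k(r,t;n) = D_k(s,t;n)$ happens to occur among the admissible tuples.

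The principal obstacle is the first step: a naive circle-method argument yields astronomical error constants, in which case $N_0$ would balloon to a range well beyond routine symbolic computation. The refinement alluded to in the remark after \Cref{thm:simple-asym} (tracking the dependence on $k$ and $t$ through the saddle-point analysis, the Euler--Maclaurin expansion on the major arcs, and the minor-arc estimates, and optimizing the circle-method cutoff parameters as functions of $k$ and $t$) is what keeps $C(k,t,r,s)$ modest enough that $N_0$ lies in a range where the truncated-generating-function computation is tractable. Once the effective bound is in hand with small enough constants, the finite verification is mechanical.
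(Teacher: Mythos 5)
Your proposal matches the paper's proof in all essentials: both make the circle-method asymptotic explicit via \Cref{thm:effective}, exploit the cancellation of the $r$-independent main term so that the difference is controlled by the $B_1$-coefficient $\frac{k-1}{2t}n^{-1/2}$ against explicit error terms, extract a threshold $N_k(t)$ (optimized over the cone parameter $\de$, which is exactly the ``optimizing the cutoff parameters'' you flag as the key obstacle), and finish with a truncated generating-function computation below the threshold. The only cosmetic difference is that the paper telescopes to consecutive residues $s=r+1$ and phrases the main term via the Bessel-type integrals $V_s(k;n)$ rather than directly via \Cref{cor:eventual-ineq}, but the logic is the same.
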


\begin{remark}
    As $t$ increases, there are stable counterexamples for the strong inequality, namely the counterexamples $(r,s, t, n) = (t-1, t, t, t+3)$ for $k=2$, $(t-1, t, t, t+2)$ for $k=3$ and $(t-1, t, t, t)$ for all other $k$. This occurs for the following reasons:
    
    \noindent (1) For $t$ at least 6, $k=2$ and $n=t+3$, the only occurrence of $t-1$ and $t$ in the partitions of $n$ is $(t, 3), (t, 2, 1), (t-1, 4), (t-1, 3, 1)$. Thus these counterexamples persist because $q(3) = q(4)$. \vspace{0.1in}
    
    \noindent (2) For $t$ at least 5, $k=3$ and $n=t+2$, the only occurrence of $t-1$ and $t$ in the partitions of $n$ is $(t, 2), (t, 1, 1), (t-1, 3), (t-1, 2, 1)$. Thus these persist because $p_3(2) = p_3(3)$. \vspace{0.1in}
    
    \noindent (3) For $t$ at least 4 and $n=t$, the only occurrence of $t-1$ and $t$ in the partitions of $n$ is $(t), (t-1, 1)$. Thus these persist because $p(0) = p(1)$.
\end{remark}
In light of this fact and the numerical checks the authors conjecture the following.
\begin{conjecture}
    If $t \geq 6$ and $1\leq r<s\leq t$, $k\geq 3$, then for $n\geq 1$ we have $D_k(r, t;n) \geq D_k(s, t;n)$. Further, for $k\geq 2$ if $n > \max\{t+3, 16\}$, the strict inequality $D_k(r, t;n)>D_k(s, t;n)$ holds.
\end{conjecture}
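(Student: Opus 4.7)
The plan is to combine a fully effective version of \Cref{thm:simple-asym} — referenced as \Cref{thm:effective} in the excerpt — with a direct finite verification from the generating function for $D_k(r,t;n)$.

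First, I would invoke \Cref{thm:effective} to replace the $O(n^{-1})$ remainder inside \Cref{thm:simple-asym} by a numerical bound $|\mathcal{E}_k(r,t;n)| \leq C(k,t)\, n^{-1}$ with an explicit constant. Since the main term $\log k$ is independent of $r$, the difference produces an effective analog of \Cref{cor:eventual-ineq}:
\[
D_k(r,t;n) - D_k(s,t;n) = \frac{e^{\pi\sqrt{2Kn/3}}}{4\, t \, 2^{3/4} 3^{1/4} K^{1/4} n^{1/4} \sqrt{k}} \Bigl((s-r)\, n^{-1/2} + \mathcal{R}_{k,t}(n)\Bigr),
\]
with $|\mathcal{R}_{k,t}(n)|$ explicitly bounded. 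Using $s - r \geq 1$, solving the scalar inequality $n^{-1/2} > |\mathcal{R}_{k,t}(n)|$ yields an integer threshold $N_0(k,t)$, uniform in the admissible pair $(r,s)$, above which the strict inequality of part (2) holds automatically.

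For the remaining range $n \leq N_0(k,t)$ — together with the short range $n \geq 1$ (or $n \geq 9$ when $k = 2$) needed for part (1) — I would compute $D_k(r,t;n)$ directly from the generating function identity
\[
\sum_{n \geq 0} D_k(r,t;n)\, q^n = \left(\sum_{\substack{m \geq 1 \\ m \equiv r \pmod{t}}} \frac{\sum_{j=1}^{k-1} j\, q^{jm}}{\sum_{j=0}^{k-1} q^{jm}}\right) \prod_{\ell = 1}^{\infty} \frac{1-q^{k\ell}}{1-q^\ell},
\]
obtained by marking parts congruent to $r$ modulo $t$ with an auxiliary variable $z$ in the product over parts and differentiating at $z = 1$. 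Expanding both sides modulo $q^{N_0(k,t)+1}$ is a routine polynomial computation which simultaneously verifies the weak inequality in part (1) for all $n$ in the stated range and the strict inequality of part (2) for $17 \leq n \leq N_0(k,t)$, across the $9 \times 9$ admissible pairs $(k,t)$ and $\binom{t}{2}$ choices of $r < s$.

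The main obstacle will be quantitative rather than structural: keeping $N_0(k,t)$ small enough for the finite check to be mechanically feasible requires each step of the circle-method derivation underlying \Cref{thm:effective} — the minor-arc estimate, the saddle-point approximation on the major arc, and the tail truncation of the resulting series — to be carried out with tight, fully numerical constants rather than mere asymptotic control. This sharpening is precisely the refinement of Craig's argument flagged in the remark following \Cref{thm:simple-asym}; once it delivers a modestly sized $N_0(k,t)$, the residual verification is routine.
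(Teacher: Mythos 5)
The statement you are proving is the paper's \emph{Conjecture}, not \Cref{thm:finite-check}, and the paper deliberately leaves it unproven. Your strategy --- an effective error bound from \Cref{thm:effective} yielding a threshold $N_0(k,t)$, followed by a finite generating-function computation up to that threshold --- is exactly the paper's proof of \Cref{thm:finite-check}, and it works only because that theorem restricts to the \emph{finitely many} pairs $2 \leq k \leq 10$, $2 \leq t \leq 10$. The conjecture quantifies over \emph{all} $k \geq 3$ (resp.\ $k \geq 2$) and \emph{all} $t \geq 6$. Since the constants $C_j(\de,k,t)$ in \Cref{thm:effective} and the resulting thresholds $N_k(t)$ depend on $k$ and $t$ and grow with both (cf.\ \Cref{fig:finite-check-numerics}, where already $N_{10}(10) \leq 531096$), your plan requires a separate finite computation for each of infinitely many pairs $(k,t)$. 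That is not a proof; it is an infinite family of finite checks. This is precisely why the statement is a conjecture: closing it would require either error bounds and thresholds that are \emph{uniform} in $k$ and $t$, or a qualitatively different argument, and neither appears in your proposal.

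A second, independent gap concerns the small-$n$ regime of part (1). For $n$ below the threshold (in particular for $n < t$, which occurs for every $t$ once $t$ is large), the circle-method asymptotic gives nothing --- the major-arc analysis already requires $n > \de^2 k^2 t^2/6$ --- so the weak inequality for all $n \geq 1$ and all $t \geq 6$ must be handled by some argument valid for arbitrarily large $t$, e.g.\ an explicit injection from parts $\equiv s \Mod t$ to parts $\equiv r \Mod t$ among $k$-regular partitions of $n$. The paper's remark about the stable counterexamples $(r,s,t,n) = (t-1,t,t,t+3)$, $(t-1,t,t,t+2)$, $(t-1,t,t,t)$ shows this regime is genuinely delicate and is the reason the strict inequality in the conjecture carries the $t$-dependent cutoff $n > \max\{t+3,16\}$. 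Your finite polynomial expansion handles this range only for the finitely many $t$ you actually compute. In short: your method correctly reproves \Cref{thm:finite-check}, but it does not and cannot establish the conjecture as stated.
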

% make footnote about 16 being weird?

The proof of \Cref{thm:finite-check} relies on a finite computer search which we perform after making the error terms in \Cref{thm:simple-asym} explicit. To prove \Cref{thm:simple-asym}, we make use of a distinct variation on Hardy and Ramanujan's circle method originally due to Wright (see for example \cite{bringmannCircle,ngo,wright}). Unlike traditional applications of the circle method where modularity (specifically, a modular transformation law) is used to estimate generating functions near singularities, the relevant generating function for $D_k(r,t;n)$ is not modular. However, the generating function may be broken up into two components, the first of which is modular and whose transformation law can be used to obtain estimates. The second component may be expressed as a sum of polylogarithms, and classical Euler-Maclaurin summation yields another method for computing asymptotic expansions. These estimates may then be combined to produce the asymptotic expansion for $D_k(r,t;n)$ via Wright's circle method.

%\textcolor{Red}{Maybe include more patterns/counterexamples?}

The paper is organized as follows. In \Cref{sec:prelim}, we recall known results which inform our approach, including a variant of Wright's circle method (see \Cref{thm:simple-circle}) and an explicit version of Euler-Maclaurin summation proven in \cite{craig} (see \Cref{prop:euler-maclaurin}). In \Cref{sec:estimates}, we derive a convenient form of the generating function for $D_k(r,t;n)$ and use it to obtain bounds on the major and minor arcs for use in the circle method. \Cref{sec:proofs-main} then goes through the application of these explicit asymptotics to prove \Cref{thm:simple-asym,thm:effective} along with \Cref{thm:finite-check}.

\section*{Acknowledgements}

The authors were participants in the 2022 UVA REU in Number Theory. They would like to thank Ken Ono, the director of the UVA REU in Number Theory, as well as their graduate student mentor William Craig. They would also like to thank their colleagues at the UVA REU for their encouragement and support. They are grateful for the support of grants from the National Science Foundation (DMS-2002265, DMS-2055118, DMS-2147273), the National Security Agency (H98230-22-1-0020), and the Templeton World Charity Foundation.

\section*{Data Availability}

The authors implemented a program in Mathematica to perform the finite checks at the end of the paper. This program can be obtained from GitHub at
\begin{center}
    \href{https://github.com/FayeAlephNil/KRegularBiases}{https://github.com/FayeAlephNil/KRegularBiases}
\end{center}
or, upon reasonable request, from the authors.

\section{Preliminaries}\label{sec:prelim}

\subsection{Bernoulli Polynomials and Polylogarithms}\label{subs:bernoulli-polylog}

In this subsection, we recall the \textit{Bernoulli polynomials} as well as \textit{polylogarithms} and several of their properties. The generating function for the Bernoulli polynomials is given in \cite[(24.2.3)]{nist} by
\begin{align*}
    \sum_{n \geq 0} B_n(x) \frac{t^n}{n!} \coloneqq \frac{te^{xt}}{e^t - 1}. %\label{eq:bernoulli}
\end{align*}
The \textit{Bernoulli numbers} are defined by $B_n \coloneqq B_n(0)$. Throughout the paper we freely use Lehmer's classical bound \cite{lehmer} on the size of Bernoulli polynomials for $x \in [0,1]$, which says for $n \geq 2$ that
\begin{align}
\abs{B_n(x)} \leq \frac{2\zeta(n)n!}{(2\pi)^n} \label{eq:lehmer},
\end{align}
where $\zeta(s) \coloneqq \sum_{n \geq 1} n^{-s}$ is the \textit{Riemann zeta function}. We also recall that $B_{2n + 1} = 0$ for $n > 0$. 

We now recall from \cite[25.12.10]{nist} that the \textit{polylogarithm} $\Li_s(q)$ for $s \in \CC$ is defined for $\abs{q} < 1$ by
\[
\Li_s(q) \coloneqq \sum_{n \geq 1} \frac{q^n}{n^s}
\]
and for other $q$ by analytic continuation. In particular, $\Li_0(q) = \frac{q}{1 - q}$ for $q \neq 1$. A simple consequence of this definition is that if $q = e^{-z}$, then
\begin{align}
    \frac{\partial}{\partial z} \Li_s(q) = -\Li_{s-1}(q) \label{eq:polylog-der}.
\end{align}
We require two primary results concerning polylogarithms. The first is a series expansion near $q = 1$ given in \cite[25.12.12]{nist}, whereby for $q = e^{-z}$ and $\abs{z} < 2\pi$ we have
\begin{align}
    \Li_s(q) = \Gamma(1-s)z^{s-1} + \sum_{n = 0}^\infty \zeta(s - n) \frac{(-z)^n}{n!} \label{eq:polylog-series}.
\end{align}
The second is a method of expressing $\Li_{-N}(q)$ for positive integer $N$ as a rational function of $q$. Namely, \cite{wood} gives
\begin{align}
\Li_{-N}(q) = \frac{1}{(1-q)^{N+1}} \sum_{m=0}^{N-1} \eulerian{N}{m} q^{N-m} \label{eq:polylog-eulerian},
\end{align}
where $\eulerian{N}{m}$ are the \textit{Eulerian numbers}, that is the number of permutations of $\{1,\ldots,N\}$ in which exactly $m$ elements are greater than the previous element.

\subsection{Classical Results on the Partition Generating Function}\label{subs:classic-partitions}

In this subsection, we recall the \textit{partition generating function} given by
\[
    \P(q) \coloneqq \sum_{n \geq 0} p(n)q^n = \prod_{n \geq 1} \frac{1}{1 - q^n},
\]
which converges absolutely for $\abs{q} < 1$. For later convenience, we recall the standard \textit{$q$-Pochhammer symbol} $(a;q)_\infty$ defined by
\[
    (a;q)_\infty \coloneqq \prod_{n \geq 1} \left(1 - aq^{n-1}\right).
\]
For $\abs{q} < 1$, we have $\P(q) = (q,q)_\infty^{-1}$. We require a few classical results concerning $\P(q)$ and $p(n)$. The first is an absolute bound on $\log \P(q)$ when $0< q < 1$ and the second is a subexponential bound on $p(n)$, together these can be found in \cite[Ch.~10.4.1-2]{stein} as exercises. Thus, for $0 < q < 1$ and $n \geq 1$, we have that
\begin{align}
    \log \P(q) &\leq \frac{\pi^2}{6(1-q)} \label{eq:logp-absolute}, \\
    p(n) &\leq e^{(\pi^2/6 + 1)\sqrt{n}} \label{eq:subexp-p}.
\end{align}
The bound \eqref{eq:subexp-p} is a crude version of Hardy-Ramanujan's asymptotic given in \eqref{eq:hi}. We also require Euler's pentagonal number theorem, which may be stated as
\begin{align}
    \P(q)^{-1} = 1 + \sum_{m \geq 1} (-1)^m\left(q^{m(3m+1)/2} + q^{m(3m-1)/2} \right) \label{eq:pentagonal}.
\end{align}
Finally, we make extensive use of the modular transformation law for $\P(q)$, which can be found in \cite[Thm.~5.1]{apostol} and which we state here for convenience:
\begin{align}
    \P(e^{-z}) = \sqrt{\frac z{2\pi}} \exp\left(\frac{\pi^2}{6z} - \frac z{24}\right) \P\left(e^{-\frac{4\pi^2}z}\right). \label{eq:modular-p}
\end{align}

\subsection{Euler-Maclaurin Summation}\label{subs:euler-maclaurin}

In this subsection, we recall classical Euler-Maclaurin summation, an asymptotic version due to Zagier \cite{zagier}, and an explicit version of this asymptotic due to Craig \cite{craig}. Euler-Maclaurin summation provides an exact formula for the difference between the integral $\int_a^b f(x) \d x$ and the finite sum $f(a + 1) + \cdots + f(b)$ when $a\leq b$ are positive integers. More specifically,
\begin{align*}
    \sum_{m=1}^{b-a} f(a + m) - \int_a^b f(x) \d x = \sum_{m=1}^N \frac{B_m}{m!}\left(f^{(m-1)}(b) - f^{(m-1)}(a)\right) + (-1)^{N+1}\int_a^b f^{(N)}(x)\frac{\widehat{B}_N(x)}{N!}\d x,
\end{align*}
where $\widehat{B}_n(x) \coloneqq B_n(x - \floor{x})$ and $\floor{x}$ denotes the greatest integer which is at most $x$. In \cite{zagier}, Zagier showed that when $f(z)$ has a known asymptotic expansion, the Euler-Maclaurin summation formula has a useful asymptotic variation. Here, we mean asymptotic expansion in the strong sense, where we say that $f(z) \sim \sum_{n \geq 0} b_nz^n$ provided that for all $N > 0$, we have $f(z) - \sum_{n=1}^N b_nz^n = O(z^{N+1})$ as $z \to 0$. This asymptotic form of Euler-Maclaurin summation has been applied in recent years to understand the growth functions without nice modular transformation laws (for examples, see \cite{beckwith2017,bringmannTauberian,bringmannRankUni,bringmannCircle,craig}).

For convenience, we now establish notation which we use freely for the remainder of the paper. For $\theta > 0$, define $D_\theta \coloneqq \{z \in \mathbb{C} : \abs{\operatorname{arg} z} < \frac{\pi}{2} - \theta\}$. If we set $z = \eeta + iy$ for $\eeta > 0$, then $z \in D_\theta$ if and only if $0 < \abs{y} < \Delta \eeta$ for some constant $\Delta > 0$ which depends on $\theta$. Furthermore set
\begin{align*}
    I_f \coloneqq \int_0^\infty f(x) \d x
\end{align*}
for any function $f$ for which this integral converges. Zagier requires that $f$ have \textit{rapid decay at infinity}, that is $f(x) = O(x^{-N})$ as $x \to \infty$ for any $N > 1$. The explicit version derived by Craig in \cite{craig} requires a less restrictive decay condition on $f(x)$ at infinity, referred to as \textit{sufficient decay}, which holds if $f(x) = O(x^{-N})$ as $x \to \infty$ for some $N > 1$. We now recall the original version due to Zagier (see \cite[Proposition~3]{zagier}). If $f$ has asymptotic expansion $f(x) \sim \sum_{n=0}^\infty c_nx^n$ at the origin and $f$ as well as all its derivatives have of rapid decay at infinity, then we have the asympototic expansion
\[
    \sum_{m = 1}^\infty f(mx) \sim \frac{I_f}{x} + \sum_{n=0}^\infty c_n\frac{B_{n+1}}{n+1}(-x)^n
\]
as $x \to 0^+$.

The following proposition is a refinement of this result due to Craig which we will use in the proof of \Cref{thm:effective}.
\begin{prop}[{\cite[Proposition~3.3]{craig}}]\label{prop:euler-maclaurin}
    Let $f(z)$ be $C^\infty$ in $D_\theta$ with power series expansion $f(z) = \sum_{n \geq 0} c_nz^n$ that converges absolutely in the region $0 \leq \abs{z} < R$ for some positive constant $R$, and let $f(z)$ and all its derivatives have sufficient decay as $z \to \infty$ in $D_\theta$. Then for any real number $0 < a \leq 1$ and any integer $N > 0$, we have
    \begin{align*}
        \abs{\sum_{m \geq 0} f((m+a)z) - \frac{I_f}{z} + \sum_{n = 0}^{N-1} c_n\frac{B_{n+1}(a)}{n+1}z^n} \leq \frac{M_{N+1}J_{f,N+1}(z)}{(N+1)!} \abs{z}^N + \sum_{k \geq N} \abs{c_k}\left(1 + \frac{k!}{10(k-N)!}\right) \abs{z}^k,
    \end{align*}
    where $M_{N+1} \coloneqq \max\limits_{0 \leq x \leq 1} \abs{B_{N+1}(x)}$ and
    \[
        J_{f,N+1}(z) \coloneqq \int_0^\infty \abs{f^{(N+1)}(w)} \abs{\d w},
    \]
    where the path of integration proceeds along the line through the origin and $z$.
\end{prop}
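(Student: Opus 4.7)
The plan is to apply the classical Euler--Maclaurin formula to the auxiliary function $F(x) := f((x+a)z)$ on $[0,\infty)$, using the integral form of the remainder and then converting the boundary terms into the claimed main sum via the power series of $f$. Because $f$ has sufficient decay in $D_\theta$ (and by differentiating the power series, so do all $f^{(j)}$), the boundary contributions as $x \to \infty$ vanish and we obtain an identity of the shape
\[
    \sum_{m \geq 0} f((m+a)z) \;=\; \int_0^\infty f((x+a)z)\,dx \;-\; \sum_{j=1}^{N+1} \frac{B_j(a)}{j!}\,z^{j-1} f^{(j-1)}(az) \;+\; R_{N+1}(z),
\]
where $R_{N+1}(z)$ is an explicit integral of $f^{(N+1)}((x+a)z)$ against a periodic Bernoulli polynomial in $x$. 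A change of variables turns the leading integral into $\tfrac{I_f}{z} - \tfrac{1}{z}\int_0^{az} f(u)\,du$, isolating the expected leading term $I_f/z$.

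Next I would substitute the convergent expansion $f(u) = \sum_{k\geq 0} c_k u^k$ into both the small integral $\int_0^{az}f(u)\,du$ and each derivative $f^{(j-1)}(az)$, and collect terms by powers of $z$. Using the standard identity $\sum_{j=0}^{n+1}\binom{n+1}{j} B_j(a)\,a^{n+1-j} = B_{n+1}(a) + (\text{telescoping corrections})$ for Bernoulli polynomials, the contributions from indices $k < N$ telescope so that the total coefficient of $c_n z^n$ for each $n < N$ equals exactly $-B_{n+1}(a)/(n+1)$; this reproduces the main asymptotic sum in the proposition. All contributions from indices $k \geq N$ survive as an explicit series tail that still needs to be bounded.

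The two pieces of the claimed error bound then come from distinct sources. First, the integral remainder $R_{N+1}(z)$ is controlled by pulling the periodic Bernoulli polynomial's sup-norm $M_{N+1}$ out of the integral and parametrizing along the ray from $0$ to $z$, recognizing what is left as $J_{f,N+1}(z)$ and producing the term $\tfrac{M_{N+1} J_{f,N+1}(z)}{(N+1)!}|z|^N$. Second, the leftover contributions of the high-order coefficients $c_k$ ($k \geq N$) that were dropped from the telescoping step are bounded using Lehmer's estimate \eqref{eq:lehmer} on $|B_j(a)|$; careful accounting of the binomial factors $\tfrac{k!}{j!(k-j+1)!}$ appearing from $f^{(j-1)}(az)$ produces the $\sum_{k\geq N}|c_k|(1 + \tfrac{k!}{10(k-N)!})|z|^k$ piece.

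The main obstacles are the two bookkeeping steps: (i) verifying that the binomial/Bernoulli identity genuinely collapses the boundary derivative terms plus the small integral into the clean expression $-\sum_{n<N} c_n B_{n+1}(a) z^n/(n+1)$, and (ii) producing the explicit constant $\tfrac{1}{10}$ in the tail bound. The first is purely algebraic and should fall out of the standard recursion for $B_n(a)$. The second requires combining Lehmer's inequality with a uniform numerical estimate of the form $\tfrac{2\zeta(j)}{(2\pi)^j} \leq \tfrac{1}{10}$ across the range of indices $j$ that appear, handling the small-$j$ cases separately if needed; this is the sort of explicit calibration that distinguishes Craig's refinement from Zagier's purely asymptotic statement and will require some care, but no new ideas beyond classical estimates for $\zeta$ and the Bernoulli polynomials.
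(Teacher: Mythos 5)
This proposition is not proved in the paper at all: it is imported verbatim from \cite{craig} (Proposition~3.3 there), so there is no in-paper argument to compare against. Your plan is essentially the proof given in that source: classical Euler--Maclaurin applied to $F(x)=f((x+a)z)$ at integer nodes, boundary terms at infinity killed by the decay hypothesis, the change of variables producing $I_f/z$ minus a short integral over $[0,az]$, reassembly of $B_{n+1}(a)$ from the Taylor coefficients for $n<N$, the sup-norm bound $M_{N+1}$ on the periodic Bernoulli factor in the remainder integral, and Lehmer's bound \eqref{eq:lehmer} supplying the $\tfrac{1}{10}$ in the tail. One correction to how you have written the key step: once the Euler--Maclaurin nodes are the integers, the boundary terms carry the plain Bernoulli numbers $B_j$ (more precisely $B_j(1)=(-1)^jB_j$), \emph{not} $B_j(a)$; the shift $a$ enters only through the evaluation point $az$ of $f^{(j-1)}$ and through the leftover integral $\tfrac1z\int_0^{az}f(u)\,du$, and the identity that collapses the coefficient of $c_nz^n$ is exactly $B_{n+1}(a)=\sum_{j=0}^{n+1}\binom{n+1}{j}B_j\,a^{n+1-j}$ with no correction terms (the $j=0$ term is supplied by the short integral, the $j\geq1$ terms by the derivatives, and the stray $a^k$ from $F(0)$ cancels against $B_{n+1}(a+1)-B_{n+1}(a)=(n+1)a^n$). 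Keeping $B_j(a)$ in the boundary sum as you wrote it would reassemble $B_{n+1}(2a)$ by the addition theorem, which is wrong. Two smaller points to attend to when executing: identifying the rotated integral $\tfrac1z\int_{az}^{\infty}f(u)\,du$ along the ray through $z$ with $I_f-\int_0^{az}f$ requires a contour rotation justified by analyticity and decay in $D_\theta$; and the $j=1$ boundary term (where Lehmer does not apply and $|B_1|=\tfrac12$) must be absorbed, together with $|a^k|\le1$ and the short integral, into the leading ``$1$'' of the factor $1+\tfrac{k!}{10(k-N)!}$, with only $j\ge2$ charged to the $\tfrac1{10}$.
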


\subsection{Variants on Wright's Circle Method}\label{subs:wright-circle}

In this subsection, we recall a result of Bringmann, Craig, Ono, and Males from \cite{bringmannCircle}, which is a variation of Wright's circle method \cite{wright}. Wright's circle method allows one to give asymptotics for the coefficients of a $q$-series $F(q)$ which has suitable asymptotic behavior near the unit circle. More precisely, given a circle $\mathcal{C}$ centered at the origin with radius less than $1$ in the $q$-plane, we define its \textit{major arc} as that region of $\mathcal{C}$ where $F(q)$ is largest.  To define the major arc, let $\mathcal{L} = \{\eeta + iy \mid \abs{y} \leq \pi\}$, where $e^{-\eeta} < 1$ is the radius of $\mathcal{C}$. In our application and in those of \cite{beckwith2017,craig}, the major arc $\mathcal{C}_1$ is given by $\{e^{-z} \mid z \in \mathcal{L} \cap D_\theta\}$.
% In our applications and in those of \cite{beckwith2017,craig}, this is given by $\mathcal{C}_1 \coloneqq \mathcal{C} \cap D_\theta$ for $\theta > 0$. 
The \textit{minor arc} of $\mathcal{C}$ is then defined by $\mathcal{C}_2 \coloneqq \mathcal{C} \setminus \mathcal{C}_1$. In Wright's circle method, the integral taken over $\mathcal{C}_1$ gives the main term for the coefficients of $F(q)$ and the integral over $\mathcal{C}_2$ is an error term. We now recall the version of Wright's circle method which we will use in the proof of \Cref{thm:simple-asym}.

\begin{theorem}[{\cite[Proposition~4.4]{bringmannCircle}}]\label{thm:simple-circle}
    Suppose that $F(q)$ is analytic for $q = e^{-z}$ where $z = x + iy$ satisfies $x > 0$ and $\abs{y} < \pi$, and suppose that $F(q)$ has an expansion $F(q) = \sum_{n=0}^\infty c(n)q^n$ near $q = 1$. Let $N,\Delta > 0$ be fixed constants. Consider the following hypotheses:
    \begin{enumerate}[label={(\arabic*)}]
        \item\label{item:major-arc} As $z \to 0$ in the bounded cone $\abs{y} \leq \Delta x$ (major arc), we have
        \[
        F(e^{-z}) = Cz^Be^{\frac{A}{z}}\left(\sum_{j = 0}^{N-1} \alpha_jz^j + O_\theta(\abs{z}^N)\right),
        \]
        where $\alpha_s \in \CC, A,C \in \R^+,$ and $B \in \R$.
        \item\label{item:minor-arc} As $z \to 0$ in the bounded cone $\Delta x \leq \abs{y} < \pi$ (minor arc), we have
        \[
            \abs{F(e^{-z})} \ll_{\theta} e^{\frac{1}{\Re(z)}(A - \rho)},
        \]
        for some $\rho \in \R^+$.
    \end{enumerate}
    If \ref{item:major-arc} and \ref{item:minor-arc} hold, then as $n \to \infty$ we have
    \[
    c(n) = Cn^{\frac{1}{4}(-2B - 3)}e^{2\sqrt{An}}\left(\sum_{r=0}^{N-1} p_rn^{-\frac{r}{2}} + O\left(n^{-\frac{N}{2}}\right)\right),
    \]
    where $p_r \coloneqq \sum\limits_{j = 0}^r \alpha_j c_{j,r-j}$ and $c_{j,r} \coloneqq \frac{\left(-\frac{1}{4\sqrt{A}}\right)\sqrt{A}^{j + B + \frac{1}{2}}}{2\sqrt{\pi}} \cdot \frac{\Gamma(j + B + \frac{3}{2} + r)}{r!\Gamma(j + B + \frac{3}{2} - r)}$.
\end{theorem}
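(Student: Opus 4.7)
The plan is to apply Cauchy's theorem to extract $c(n)$ as a contour integral and then perform a saddle-point analysis with radius chosen so that the hypotheses are usable. Specifically, I would write
\[
c(n) = \frac{1}{2\pi i} \oint_{|q| = e^{-\eeta}} F(q)\,q^{-n-1}\,dq = \frac{1}{2\pi} \int_{-\pi}^{\pi} F(e^{-\eeta - iy})\,e^{n(\eeta + iy)}\,dy,
\]
and then choose $\eeta = \sqrt{A/n}$, which is the saddle point of $\exp(A/z + nz)$ restricted to the positive real axis and makes the dominant exponent take the target value $2\sqrt{An}$. With this choice the contour naturally splits along $|y| = \Delta \eeta$ into the major arc $\mathcal{C}_1$ and minor arc $\mathcal{C}_2$ described before the theorem statement.

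On the minor arc, I would apply hypothesis \ref{item:minor-arc} directly: $|F(e^{-z})| \ll e^{(A-\rho)/\eeta}$, so
\[
\Bigl|\int_{\mathcal{C}_2} F(e^{-z})\,e^{nz}\,dy\Bigr| \ll e^{(A-\rho)/\eeta + n\eeta} = e^{2\sqrt{An} - \rho\sqrt{n/A}},
\]
which is exponentially smaller than the expected main term and hence is absorbed into the $O(n^{-N/2})$ error. On the major arc, I would substitute the expansion from hypothesis \ref{item:major-arc}, obtaining
\[
\frac{1}{2\pi}\int_{\mathcal{C}_1} C z^B e^{A/z + nz}\Bigl(\sum_{j=0}^{N-1} \alpha_j z^j + O(|z|^N)\Bigr)\,dy,
\]
after writing $dy$ in terms of $dz$. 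Each individual integral $\int z^{B+j} e^{A/z+nz}\,dz$ is, up to change of variables $z = \sqrt{A/n}\,u$, a Hankel-type contour integral for a modified Bessel function $I_\nu(2\sqrt{An})$; the asymptotic expansion of $I_\nu$ from \cite{nist} then yields an asymptotic series in $n^{-1/2}$ with explicit coefficients $c_{j,r}$ matching the formula in the statement. Extending the truncated contour to a full Hankel contour introduces only exponentially small errors, again controlled by the same saddle-point estimate.

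The main obstacle is the bookkeeping in the major arc: to read off $p_r = \sum_{j=0}^r \alpha_j c_{j,r-j}$ with the exact Gamma-function ratios given, one must (i) justify deforming the truncated vertical segment into the standard Hankel contour for the Bessel function, (ii) collect all polynomial prefactors $z^{B+j}$ carefully, and (iii) match the Bessel asymptotic coefficient $\frac{\Gamma(\nu + r + 1/2)}{r!\,\Gamma(\nu - r + 1/2)}\cdot(8x)^{-r}$ to the stated $c_{j,r}$ with $\nu = j + B + 1/2$ and $x = \sqrt{An}$. Finally, the $O(|z|^N)$ term in \ref{item:major-arc} contributes an integral bounded by $\eeta^{N+B}\int e^{A/\eeta + n\eeta}\,dy$, which after the saddle-point normalization gives exactly the claimed $O(n^{-N/2})$ error.
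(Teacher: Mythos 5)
This statement is an imported result --- the paper quotes it verbatim from \cite[Proposition~4.4]{bringmannCircle} and supplies no proof of its own, so there is nothing in the paper to compare your argument against line by line. Your outline (Cauchy's formula on the circle $|q|=e^{-\eeta}$ with the saddle-point radius $\eeta=\sqrt{A/n}$, exponential smallness of the minor arc from hypothesis \ref{item:minor-arc}, and term-by-term reduction of the major arc to truncated Hankel integrals for $I_\nu(2\sqrt{An})$ whose asymptotic expansions produce the $c_{j,r}$) is precisely the Wright-circle-method argument used in the cited source, and it is also the template the present paper follows when it proves its explicit analogue in \Cref{thm:effective} via the integrals $V_s(k;n)$ and \Cref{lemma:modified-bessel-estimate}. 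The only caveat is at the level of bookkeeping you already flag: the Bessel coefficient you quote should read $a_r(\nu)x^{-r}=\frac{\Gamma(\nu+r+\frac12)}{r!\,2^r\,\Gamma(\nu-r+\frac12)}x^{-r}$ rather than carrying the full $(8x)^{-r}$ alongside the Gamma ratio, and this factor must be tracked to recover the stated normalization of $c_{j,r}$; otherwise the approach is sound.
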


\begin{remark}
    The constant $C$ in \Cref{thm:simple-circle} does not appear in \cite{bringmannCircle}, but it is trivially equivalent to the result in \cite{bringmannCircle} by factoring out $C$ from each $\alpha_i$.
\end{remark}

\subsection{Estimates with Bessel Functions}\label{subs:bessel}

In this subsection, we establish estimates on the \textit{modified Bessel functions} $I_\nu(x)$ which we will require in our implementation of Wright's circle method. We recall that $I_\nu(x)$ is defined by
\[
    I_\nu(x) \coloneqq \left(\frac{x}{2}\right)^\nu \frac{1}{2\pi i}\int_{\mathcal{D}} t^{-\nu -1} \exp\left(\frac{x^2}{4t} + t\right) \d t,
\]
where $\mathcal{D}$ is any contour running from $-\infty$ below the real axis, counterclockwise around $0$, and back to $-\infty$ above the real axis. We shall choose $\mathcal{D} = \mathcal{D}_- \cup \mathcal{D}_0 \cup \mathcal{D}_+$, each of which depend on a particular choice of $\mu,\Delta > 0$, defined by
\begin{align*}
    \mathcal{D}_{\pm}^{\mu,\Delta} &\coloneqq \{u + iv \in \CC \mid u \leq \mu, v = \pm \Delta \mu\}, \\
    \mathcal{D}_0^{\mu,\Delta} &\coloneqq \{u + iv \in \CC \mid u = \mu, \abs{v} \leq \Delta \mu\}.
\end{align*}
Note that this dependence on $\mu,\Delta$ does not change the value of the integral, as we may freely shift the path of integration. We compare the size of $I_\nu(x)$ to the main term along $\mathcal{D}_0$. In particular, define
\[
    \widetilde{I}^{\mu,\Delta}_\nu(x) \coloneqq \left(\frac{x}{2}\right)^\nu \frac{1}{2\pi i}\int_{\mathcal{D}_0^{\mu,\Delta}} t^{-\nu -1} \exp\left(\frac{x^2}{4t} + t\right) \d t.
\]
The following lemma shows how $\widetilde{I}^{\mu,\Delta}_\nu(x)$ approximates $I_\nu(x)$.
\begin{lemma}\label{lemma:modified-bessel-estimate}
    Let $\nu, \mu,\Delta,x \in \R$, $\de = \sqrt{1+\Delta^2}$ and $\mu, \Delta>0$, then we have that
    \begin{align*}
        \abs{I_\nu(x) - \widetilde{I}^{\mu,\Delta}_\nu(x)} \leq  \frac{1}{\pi}\left(\frac{x}{2}\right)^\nu \exp\left(\frac{x^2}{4\mu\Delta^2}\right) \int_0^\infty \left(\delta x + u\right)^{-\nu - 1} \exp(-u) \d u.
    \end{align*}
\end{lemma}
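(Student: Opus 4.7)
The plan is to express the difference $I_\nu(x) - \widetilde I_\nu^{\mu,\Delta}(x)$ as a contour integral along the two horizontal rays that make up $\mathcal{D} \setminus \mathcal{D}_0^{\mu,\Delta}$, and then carefully estimate the integrand on one of these rays. By the definition of $I_\nu(x)$ and $\widetilde I_\nu^{\mu,\Delta}(x)$,
\[
I_\nu(x) - \widetilde I_\nu^{\mu,\Delta}(x) \;=\; \left(\frac{x}{2}\right)^\nu \frac{1}{2\pi i}\int_{\mathcal{D}_-^{\mu,\Delta}\,\cup\,\mathcal{D}_+^{\mu,\Delta}} t^{-\nu-1}\exp\!\left(\frac{x^2}{4t}+t\right)dt.
\]
For real $x,\nu$, the integrals along $\mathcal{D}_+^{\mu,\Delta}$ and $\mathcal{D}_-^{\mu,\Delta}$ are complex conjugates of each other, so I bound the absolute value of their sum by twice the absolute value of either ray. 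This collapses the prefactor $\frac{1}{2\pi}$ from the Bessel representation to the desired $\frac{1}{\pi}$.

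Next I parametrize $\mathcal{D}_+^{\mu,\Delta}$ by $t = s + i\Delta\mu$ with $s \in (-\infty,\mu]$. The key elementary estimate is on $\Re\!\left(\frac{x^2}{4t}\right) = \frac{x^2 s}{4|t|^2}$. For $s \le 0$ this quantity is nonpositive, while for $0 \le s \le \mu$, using $|t|^2 = s^2 + \Delta^2\mu^2 \ge \Delta^2\mu^2$ and $s \le \mu$, I get
\[
\Re\!\left(\frac{x^2}{4t}\right) \;\le\; \frac{x^2 s}{4\Delta^2\mu^2} \;\le\; \frac{x^2}{4\mu\Delta^2}.
\]
This pulls the factor $\exp\!\bigl(\tfrac{x^2}{4\mu\Delta^2}\bigr)$ out of the integral in precisely the form appearing in the claim.

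With that prefactor extracted, what remains is to bound the integral of $|t|^{-\nu-1}\,e^{\Re(t)}$ along $\mathcal{D}_+^{\mu,\Delta}$ and massage it into the form $\int_0^\infty (\delta x + u)^{-\nu-1}e^{-u}\,du$. I would perform a translation of the integration variable (moving the endpoint of the ray to the origin and reversing direction so $u$ runs over $[0,\infty)$ with $e^{\Re(t)}$ becoming $e^{-u}$) and then invoke the geometric lower bound on $|t|$ along the ray -- obtained by combining the triangle inequality $|t| \ge \sqrt{s^2 + \Delta^2\mu^2}$ with the relation $\delta^2 = 1 + \Delta^2$ -- to replace $|t|^{-\nu-1}$ by $(\delta x + u)^{-\nu-1}$. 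Putting the three factors $\frac{1}{\pi}$, $(x/2)^\nu$, and $\exp\!\bigl(\tfrac{x^2}{4\mu\Delta^2}\bigr)$ in front of this integral yields the claimed bound.

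The main obstacle is the last step: producing the exact form $(\delta x + u)^{-\nu-1}$ for the factor $|t|^{-\nu-1}$. The estimate on $\Re(x^2/(4t))$ is a routine convexity-style argument, but matching the integrand on the ray to a clean $u$-integral with the right shift $\delta x$ requires a careful choice of substitution, plus attention to signs and to the fact that the integrand is monotone in the new variable so the resulting inequality passes under the integral. Once these elementary but fiddly estimates are set down, assembling them with the factor of $2$ coming from combining $\mathcal{D}_+^{\mu,\Delta}$ and $\mathcal{D}_-^{\mu,\Delta}$ completes the proof.
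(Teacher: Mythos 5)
Your setup coincides with the paper's proof: the same decomposition of the difference as the integral over $\mathcal{D}_-^{\mu,\Delta}\cup\mathcal{D}_+^{\mu,\Delta}$, the same parametrization of a ray by its real part, and the same estimate $\Re\left(\frac{x^2}{4t}\right)\le\frac{x^2}{4\mu\Delta^2}$ (the paper simply bounds both rays by the identical quantity rather than invoking conjugate symmetry, which is equivalent). The genuine gap is the last step, which you yourself flagged as the obstacle, and your proposed fix points in the wrong direction. Writing $t=(\mu-u)\pm i\Delta\mu$ with $u\ge 0$, we have $\abs{t}=\sqrt{(\mu-u)^2+\Delta^2\mu^2}$, which equals $\de\mu$ at $u=0$, \emph{decreases} to $\Delta\mu$ at $u=\mu$, and only then grows; so there is no lower bound of the form $\abs{t}\ge\de(\cdot)+u$, and an upper bound on $\abs{t}^{-\nu-1}$ cannot be obtained that way. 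The paper instead uses the \emph{upper} bound $\abs{t}\le\de\mu+u$, which follows from $(\de\mu+u)^2-\abs{t}^2=2(1+\de)\mu u\ge 0$; this upper-bounds $\abs{t}^{-\nu-1}$ precisely because the exponent $-\nu-1$ is nonnegative in the only range where the lemma is applied ($\nu=-s$ with $s\in\{1,2,4\}$), so $y\mapsto y^{-\nu-1}$ is nondecreasing there. Note also that the ``$\de x$'' in the statement is a typo for ``$\de\mu$'': both the paper's proof and its use in the proof of \Cref{thm:finite-check} produce $(\de\mu+u)^{-\nu-1}$, which is why you could not see how to manufacture the factor $\de x$.

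One further caution: on the ray, $\Re(t)=\mu-u$, so $\abs{e^{t}}=e^{\mu-u}$ rather than $e^{-u}$; your sketch (and, as written, the paper's proof) drops the factor $e^{\mu}$, which is not obviously harmless since $\mu$ is large in the application and comparable to $x/2$. Aside from this shared slip, once the inequality $\abs{t}\le\de\mu+u$ replaces your lower-bound step, the assembly of the prefactors $\frac{1}{\pi}$, $(x/2)^{\nu}$, and $\exp\bigl(\frac{x^2}{4\mu\Delta^2}\bigr)$ is exactly as you describe.
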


\begin{proof}
    For ease of notation we suppress $\mu,\Delta$ where clear from context. We clearly have that
    \begin{align*}
        I_\nu(x) - \widetilde{I}_\nu(x) = \left(\frac{x}{2}\right)^{\nu}\frac{1}{2\pi i}\int_{\mathcal{D}_- \cup \mathcal{D}_+} t^{-\nu - 1}\exp\left(\frac{x^2}{4t} + t\right)\d t.
    \end{align*}
    For $t \in \mathcal{D}_-$ we may set $t = (\mu - u) - \Delta \mu i$. Since for $u \geq 0$, we have
    \[
        \Re\left(\frac{x^2}{4t}\right) = \frac{x^2}{4} \cdot \frac{\mu - u}{(\mu - u)^2 + \Delta^2 \mu^2} \leq \frac{x^2}{4\mu\Delta^2},
    \]
    it follows that
    \begin{align*}
        \abs{t^{-\nu - 1}\exp\left(\frac{x^2}{4t} + t\right)} &\leq \abs{t}^{-\nu - 1}\exp\left(\frac{x^2}{4\mu\Delta^2} - u\right) \leq \left(\delta\mu + u\right)^{-\nu - 1}\exp\left(\frac{x^2}{4\mu\Delta^2} - u\right).
    \end{align*}
    The same bound holds for $\mathcal{D}_+$. The result then follows.
\end{proof}

\section{Estimates on the Major/Minor Arc}\label{sec:estimates}

\subsection{Generating Functions}\label{subs:generating-func}

In this subsection, we derive a form of the generating function $\D_k(r,t;q)$ of $D_k(r,t;n)$ which is amenable to calculations. We then fit one component of the generating function to the framework of Euler-Maclaurin summation while relating the remaining component to the generating function for partitions. For the remainder of the paper, we use the notation $q \coloneqq e^{-z}$. Define
\begin{align*}
    \D_{k}(r,t;q) &\coloneqq \sum_{n \geq 0} D_{k}(r,t;n)q^n
\end{align*}
as the generating function of $D_k(r,t;n)$. We then have the following expression for $\D_k(r,t;q)$.

\begin{lemma}\label{lemma:d-gen-formula}
    We have that
    \[
        \D_{r,t}(k;q) = \frac{(q^k;q^k)_\infty}{(q;q)_\infty}\sum_{m \equiv r \Mod t} \frac{q^{m}}{1-q^{m}} - \frac{kq^{km}}{1 - q^{km}}.
    \]
\end{lemma}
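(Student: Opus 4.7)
The plan is to derive this generating function identity by a standard part-marking argument combined with logarithmic differentiation. First I would rewrite $D_k(r,t;n) = \sum_{m \equiv r \Mod{t}} c_k(m;n)$, where $c_k(m;n)$ denotes the total number of occurrences of $m$ as a part across all $k$-regular partitions of $n$. It thus suffices to identify the generating function $G_m(q) := \sum_{n \geq 0} c_k(m;n) q^n$ for each fixed positive integer $m$ and then sum over $m \equiv r \Mod{t}$.

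To obtain $G_m(q)$, introduce formal variables $x_j$ tracking the multiplicity of the part $j$, giving the refined generating function
\[
F(x_1, x_2, \ldots; q) = \prod_{j \geq 1} f_j(x_j; q), \qquad f_j(x;q) := \sum_{i=0}^{k-1} (xq^j)^i = \frac{1 - (xq^j)^k}{1 - xq^j}.
\]
Setting every $x_j = 1$ collapses $F$ to the usual $k$-regular partition generating function $\tfrac{(q^k;q^k)_\infty}{(q;q)_\infty}$. Differentiating $F$ in $x_m$ (which touches only the single factor $f_m$) and then setting all $x_j = 1$ extracts the multiplicity of $m$, producing
\[
G_m(q) = \frac{(q^k;q^k)_\infty}{(q;q)_\infty} \cdot \frac{f_m'(1;q)}{f_m(1;q)}.
\]
The quotient on the right is precisely $\bigl.\tfrac{d}{dx}\log f_m(x;q)\bigr|_{x=1}$, which from the factored form of $f_m$ evaluates to
\[
\frac{d}{dx}\log f_m(x;q) = \frac{q^m}{1 - xq^m} - \frac{kq^m (xq^m)^{k-1}}{1 - (xq^m)^k} \quad \Longrightarrow \quad \frac{f_m'(1;q)}{f_m(1;q)} = \frac{q^m}{1-q^m} - \frac{kq^{km}}{1-q^{km}}.
\]
Summing $G_m(q)$ over $m \equiv r \Mod{t}$ then yields the claimed identity.

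No step presents a genuine obstacle. The only minor justification required is the interchange of $\partial/\partial x_m$ with the infinite product defining $F$, which is immediate since $x_m$ appears in only the single factor $f_m(x_m; q)$, and $F$ converges absolutely as a $q$-series for $\abs{q} < 1$. The outer sum over $m \equiv r \Mod{t}$ also converges absolutely for $\abs{q} < 1$ because each summand is $O(q^m)$, so all rearrangements are legitimate.
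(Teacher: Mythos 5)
Your proof is correct and is essentially the same as the paper's: both isolate the contribution of a fixed part $m$, writing the generating function for occurrences of $m$ as $\xi_k(q)$ times the ratio $\bigl(\sum_{j=1}^{k-1} jq^{jm}\bigr)/\bigl(\sum_{j=0}^{k-1} q^{jm}\bigr)$, and then simplify that ratio to $\frac{q^m}{1-q^m}-\frac{kq^{km}}{1-q^{km}}$. The only cosmetic difference is that you package the simplification as a logarithmic derivative of $f_m(x;q)=\frac{1-(xq^m)^k}{1-xq^m}$, whereas the paper manipulates the finite sum directly via a telescoping identity.
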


\begin{proof}
    It is a classical fact that $\xi_k(q) = (q^k;q^k)_\infty(q;q)^{-1}_\infty$ is the generating function for the $k$-regular partitions. We now break the generating function up into pieces for each $m \equiv r \Mod t$. In particular, let $\mathcal{P}_k(m;q)$ be the generating function for $k$-regular partitions including $m$ as a part, weighting each partition by how many times it includes $m$. Then we have that
    \[
        \mathcal{P}_k(m;q) = \frac{(q^m + 2q^{2m} + \cdots +  (k-1)q^{(k-1)m})}{1 + q^m + q^{2m} + \cdots + q^{(k-1)m}} \cdot \frac{(q^k;q^k)_\infty}{(q;q)_\infty}.
    \]
    By simplifying
    \begin{align*}
        \sum_{j = 1}^{k-1} jq^{jm} &= \sum_{j=1}^{k-1} \sum_{\ell = j}^{k-1} (q^{jm} + \cdots + q^{(k-1)m}) = \sum_{j = 1}^{k-1} \frac{q^{jm}(1-q^{(k-j)m})}{1-q^m},
    \end{align*}
    we obtain 
    \begin{align*}
        \mathcal{P}_k(m;q) &= \frac{(q^{k};q^{k})_\infty}{(q;q)_\infty}  \sum_{j=1}^{k-1} \frac{q^{jm}(1-q^{(k-j)m})}{1-q^m} \cdot \frac{1-q^m}{1 - q^{mk}} = \frac{(q^{k};q^{k})_\infty}{(q;q)_\infty} \cdot \left(\frac{q^m}{1-q^m} - \frac{kq^{mk}}{1 - q^{mk}}\right).
    \end{align*}
    Summing over $m \equiv r \pmod t$ gives the desired result.
\end{proof}

For the remainder of this paper, we define the components\footnote{In Beckwith and Mertens work (see \cite{beckwith2017}), the analogue of $L_k$ is multiplied by $(2\pi)^{-1/2}q^{1/24}$ and the analogue of $\xi_k$ is multiplied by $(2\pi)^{1/2}q^{-1/24}$ to easily apply Ngo-Rhoades' variant of the circle method (see \cite{ngo}). The notation used in this paper matches that of Craig in \cite{craig}.} $\xi_k(q), L_k(r,t;q)$ of $\D_k(r,t;q)$ by
\begin{align*}
    \xi_k(q) \coloneqq \frac{(q^{k};q^{k})_\infty}{(q;q)_\infty}, && L_k(r,t;q) \coloneqq \sum_{m\equiv r \Mod t} \frac{q^{m}}{1-q^{m}} - \frac{kq^{km}}{1 - q^{km}}.
\end{align*}
Now define $E_k(z) \coloneqq \frac{e^{-z}}{1 - e^{-z}} - \frac{ke^{-zk}}{1 - e^{-zk}}$. Recalling the expansion of $\Li_0(q)$, we may write 
\[
    E_k(z) = \Li_0(q) - k \Li_0(q^k).
\]
We now see that $L_k(r,t;q)$ may be expressed as a sum over integers of $E_k(z)$ evaluated at specific values.

\begin{lemma}\label{lemma:lrt-ek}
    We have
    \[
        L_k(r,t;q) = \sum_{\ell \geq 0} E_k((\ell t + r)z).
    \]
\end{lemma}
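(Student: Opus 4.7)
The plan is to prove this by a direct parameterization of the arithmetic progression combined with the substitution $q = e^{-z}$. The identity is essentially a tautology once the correct indexing set is made explicit.

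First I would note that the sum defining $L_k(r,t;q)$ runs over positive integers $m$ with $m \equiv r \pmod t$ (positivity being forced by the need for convergence of each summand as $|q|<1$). Since $1 \leq r \leq t$, the positive integers in this residue class are exactly those of the form $m = \ell t + r$ with $\ell \in \mathbb{Z}_{\geq 0}$, so the sum may be reindexed as $\sum_{\ell \geq 0}$.

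Next I would substitute $q = e^{-z}$ into the summand. For $m = \ell t + r$, we have $q^m = e^{-mz}$ and $q^{km} = e^{-kmz}$, so
\[
\frac{q^m}{1-q^m} - \frac{kq^{km}}{1-q^{km}} = \frac{e^{-mz}}{1-e^{-mz}} - \frac{ke^{-kmz}}{1-e^{-kmz}} = E_k(mz) = E_k((\ell t+r)z),
\]
by the definition of $E_k$. Summing over $\ell \geq 0$ yields the claimed identity.

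There is no real obstacle here; the only thing to be slightly careful about is to confirm that the implicit range of $m$ in the definition of $L_k(r,t;q)$ is $m \geq 1$ (equivalently, $\ell \geq 0$ after reindexing), which is clear both because the generating function $\D_k(r,t;q)$ only sees positive parts $\lambda_j$ and because the expression $\frac{q^m}{1-q^m}$ would not be a formal power series in $q$ for $m \leq 0$.
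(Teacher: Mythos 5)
Your proof is correct and is exactly the argument the paper has in mind; the paper simply records it as ``immediate from the definitions,'' and your reindexing $m = \ell t + r$, $\ell \geq 0$, together with the substitution $q = e^{-z}$ is the full justification.
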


\begin{proof}
    Immediate from the definitions of $L_k(r,t;q)$ and $E_k(z)$.
\end{proof}

\Cref{lemma:lrt-ek} will later allow us to use Euler-Maclaurin summation to estimate $L_k(r,t;q)$ on the major arc (see \Cref{lemma:major-arc-lrt}). This follows the methods established by Beckwith and Mertens as well as Craig (see \cite{beckwith2017} and \cite{craig}) to estimate the appropriate analogue $L_k(r,t;q)$ on the major arc. In order to apply \Cref{prop:euler-maclaurin} to $L_k(r,t;q)$, we develop the following expression in terms of polylogarithms for $E_k^{(N)}(z)$, as well as a series expansion near zero.

\begin{lemma}\label{lemma:ek-N-series}
    We have that
    \begin{align*}
        E_k^{(N)}(z) &= (-1)^N\left(\Li_{-N}(q) - k^{N+1}\Li_{-N}\left(q^k\right)\right) = \sum_{m=0}^\infty \frac{(1-k^{m+N+1})B_{N+m+1}}{(N+m+1) \cdot m!}z^m. %&= (-1)^N\sum_{m=0}^\infty \zeta(-N-m)\frac{(-z)^m(1-k^{m+N+1})}{m!} \\
    \end{align*}
    where the second equality only holds when $\abs{z} < \frac{2\pi}{k}$.
\end{lemma}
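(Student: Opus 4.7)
The plan is to prove the two equalities separately, with the first being a direct consequence of \eqref{eq:polylog-der} applied iteratively and the second following from the series expansion \eqref{eq:polylog-series} after a careful cancellation of the singular terms.

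For the first equality, I will start from the decomposition $E_k(z) = \Li_0(q) - k\Li_0(q^k)$ noted in the text and differentiate $N$ times in $z$. Applying \eqref{eq:polylog-der} repeatedly to $\Li_0(q)$ with $q = e^{-z}$ gives $\frac{d^N}{dz^N}\Li_0(q) = (-1)^N \Li_{-N}(q)$. For the second term I set $w = kz$ so that $q^k = e^{-w}$; then $\frac{d}{dz}\Li_0(q^k) = k \cdot \frac{d}{dw}\Li_0(e^{-w}) = -k\Li_{-1}(q^k)$, and iterating this picks up $N$ factors of $-k$, so $\frac{d^N}{dz^N}[k\Li_0(q^k)] = k \cdot (-k)^N \Li_{-N}(q^k) = (-1)^N k^{N+1}\Li_{-N}(q^k)$. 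Subtracting yields $E_k^{(N)}(z) = (-1)^N[\Li_{-N}(q) - k^{N+1}\Li_{-N}(q^k)]$, as claimed.

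For the second equality, I will apply \eqref{eq:polylog-series} with $s = -N$ to both $\Li_{-N}(q)$ (valid for $|z|<2\pi$) and $\Li_{-N}(q^k)$ (valid for $|kz|<2\pi$, i.e. $|z|<2\pi/k$, hence the stated restriction). Since $\Gamma(1-s) = \Gamma(1+N) = N!$, the expansion reads
\[
\Li_{-N}(e^{-z}) = N!\, z^{-N-1} + \sum_{n=0}^\infty \zeta(-N-n)\frac{(-z)^n}{n!},
\]
and the corresponding expansion for $\Li_{-N}(e^{-kz})$ is obtained by replacing $z$ with $kz$. Then
\[
\Li_{-N}(q) - k^{N+1}\Li_{-N}(q^k) = \bigl(N!\,z^{-N-1} - k^{N+1} N! (kz)^{-N-1}\bigr) + \sum_{n=0}^\infty \zeta(-N-n)\frac{(-z)^n - (-kz)^n k^{N+1}}{n!}.
\]
The bracketed singular part vanishes identically, so the $|z|<2\pi/k$ restriction on the right-hand side is the only obstruction and the series converges absolutely there.

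Finally, I will simplify the sum using the classical identity $\zeta(-m) = (-1)^m \frac{B_{m+1}}{m+1}$ (consistent with the convention $B_1 = -\tfrac12$ fixed by the generating function in \Cref{subs:bernoulli-polylog}). With $m = N+n$, the factor $(-1)^{N+n}$ combines with $(-1)^n$ from $(-z)^n$ to produce $(-1)^N$, which cancels the overall $(-1)^N$ introduced by the first equality. The resulting coefficient of $z^n$ is $\frac{B_{N+n+1}(1-k^{N+n+1})}{(N+n+1)\,n!}$, matching the claim. The only potential pitfall is the sign bookkeeping from the two instances of $(-1)^N$ and the convention for $\zeta(-m)$; to double-check this I will verify the $N=0$ case directly against the standard expansion $\frac{1}{e^z-1} = \frac{1}{z} + \sum_{n\geq 0}\frac{B_{n+1}}{(n+1)!}z^n$ applied to $E_k(z) = \frac{1}{e^z-1} - \frac{k}{e^{kz}-1}$.
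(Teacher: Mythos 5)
Your proposal is correct and follows essentially the same route as the paper: iterate \eqref{eq:polylog-der} for the first equality, then substitute \eqref{eq:polylog-series} at $s=-N$ and apply $\zeta(-m)=(-1)^m\frac{B_{m+1}}{m+1}$. The only difference is that you make explicit the cancellation of the singular terms $N!\,z^{-N-1}-k^{N+1}N!(kz)^{-N-1}$ and the $N=0$ sanity check, both of which the paper leaves implicit.
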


\begin{proof}
    The first equality follows from \cref{eq:polylog-der} because $E_k(z) = \Li_0(q) - k\Li_0(q^k)$. Then, substituting the series expansions for $\Li_{-N}(q)$ and $\Li_{-N}(q^k)$ given in \cref{eq:polylog-series}, we have
    \[
        E_k^{(N)}(z) = (-1)^N\sum_{m=0}^\infty \zeta(-N-m)\frac{(-z)^m(1-k^{m+N+1})}{m!},
    \]
    when $\abs{z} < \frac{2\pi}{k}$. We now recall that the Riemann zeta function's value at negative integers may be expressed in terms of the Bernoulli Numbers. In particular, $\zeta(-N-m) = (-1)^{N+m}\frac{B_{N+m+1}}{N+m+1}$.
    This gives the claimed formula above.
\end{proof}

\begin{remark}
    Note that $E_k(z)$ has rapid decay within any region $D_\theta$, since $\Li_0(q) = \frac{e^{-z}}{1 - e^{-z}}$ and all its derivatives have rapid decay in $D_\theta$. Furthermore, note that $E_k^{(N)}(z)$ does not have a pole at $z = 0$, since the principal parts of $\Li_{-N}(q)$ and $k^{N+1}\Li_{-N}(q^k)$ cancel. This fact accounts for the shape of the secondary tames, namely, the absence of the digamma function $\psi$.
\end{remark}

Specializing \Cref{lemma:ek-N-series} to $N = 0$, we have a series expansion for $E_k$ near zero, by which we define constants $e_{k,m}$ below:
\begin{align}
    E_k(z) &= \sum_{m \geq 0} \frac{e_{k,m}}{m!}z^m \coloneqq \sum_{m \geq 0} \frac{(1-k^{m+1})B_{m+1}}{(m+1) \cdot m!}z^m \label{eq:ekm}.
\end{align}

We also note here for later use the transformation law we obtain for $\xi_k(q)$ using the transformation law for $\P(q)$.
\begin{lemma}\label{lemma:xi-transform}
    For $q = e^{-z}$ and $\e \coloneqq \exp\left(-\frac{4\pi^2}{kz}\right)$, we have that
    \[
        \xi_k(q) = \frac{1}{\sqrt{k}}\exp\lr{\frac{\pi^2}{6z}\lr{1 -\frac{1}{k}} + \frac{z}{24}(k-1)}\frac{\P(\e^k)}{\P(\e)}.
    \]
\end{lemma}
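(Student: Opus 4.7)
The plan is to exploit the factorization
\[
\xi_k(q) = \frac{(q^k;q^k)_\infty}{(q;q)_\infty} = \frac{\mathcal{P}(q)}{\mathcal{P}(q^k)},
\]
so that the claim reduces to applying the known modular transformation \eqref{eq:modular-p} separately to the numerator $\mathcal{P}(e^{-z})$ and the denominator $\mathcal{P}(e^{-kz})$, and then simplifying.

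First, I apply \eqref{eq:modular-p} directly with parameter $z$ to get
\[
\mathcal{P}(e^{-z}) = \sqrt{\frac{z}{2\pi}}\exp\!\left(\frac{\pi^2}{6z} - \frac{z}{24}\right)\mathcal{P}\!\left(e^{-4\pi^2/z}\right).
\]
Next, I apply the same transformation law with $z$ replaced by $kz$, which is legitimate since $kz$ lies in the same right half-plane as $z$:
\[
\mathcal{P}(e^{-kz}) = \sqrt{\frac{kz}{2\pi}}\exp\!\left(\frac{\pi^2}{6kz} - \frac{kz}{24}\right)\mathcal{P}\!\left(e^{-4\pi^2/(kz)}\right).
\]
Using the definition $\varepsilon = \exp(-4\pi^2/(kz))$, the image of $z \mapsto 4\pi^2/z$ is $\varepsilon^k$, while the image of $kz \mapsto 4\pi^2/(kz)$ is $\varepsilon$. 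Thus the two transformed partition values are exactly $\mathcal{P}(\varepsilon^k)$ and $\mathcal{P}(\varepsilon)$, which will appear in the stated ratio.

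Finally, I form the quotient $\mathcal{P}(e^{-z})/\mathcal{P}(e^{-kz})$. The square root prefactors collapse as $\sqrt{z/(2\pi)}/\sqrt{kz/(2\pi)} = 1/\sqrt{k}$, and combining the exponentials yields
\[
\frac{\pi^2}{6z} - \frac{z}{24} - \frac{\pi^2}{6kz} + \frac{kz}{24} = \frac{\pi^2}{6z}\!\left(1 - \frac{1}{k}\right) + \frac{z}{24}(k-1),
\]
which is precisely the exponent appearing in the statement. Assembling these pieces produces the claimed identity.

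This lemma is essentially a bookkeeping calculation, so there is no real obstacle; the only thing to be careful about is matching $\varepsilon$ and $\varepsilon^k$ to the correct sides of the transformation (since $z \mapsto 4\pi^2/z$ and $kz \mapsto 4\pi^2/(kz)$ swap the roles of $\varepsilon$ and $\varepsilon^k$), and keeping track of signs in the exponential so that the $\pi^2/(6z)$ and $z/24$ terms combine with the right coefficients $1 - 1/k$ and $k-1$.
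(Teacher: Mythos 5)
Your proposal is correct and follows exactly the paper's argument: write $\xi_k(q) = \P(q)/\P(q^k)$, apply the transformation law \eqref{eq:modular-p} to both $\P(e^{-z})$ and $\P(e^{-kz})$, identify the transformed arguments as $\e^k$ and $\e$, and simplify the prefactors and exponents. Nothing is missing.
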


\begin{proof}
        Note that $\xi_k(q) = \frac{\P(q)}{\P(q^k)}$. Therefore, using the modular transformation law for $\P(q)$ given in \cref{eq:modular-p} we have that
        \begin{align*}
        \xi_k(q) &= \frac{\P(q)}{\P(q^k)} = \frac{\sqrt{\frac z{2\pi}} \exp\left(\frac{\pi^2}{6z} - \frac z{24}\right) \P\left(\exp\left(-\frac{4\pi^2}{z}\right)\right)}
{\sqrt{\frac {zk}{2\pi}} \exp\left(\frac{\pi^2}{6zk} - \frac {zk}{24}\right) \P\left(\exp\left(-\frac{4\pi^2}{zk}\right)\right)} \\
&= \frac{1}{\sqrt{k}}\exp\lr{\frac{\pi^2}{6z}\lr{1 -\frac{1}{k}} + \frac{z}{24}(k-1)}\frac{\P(\e^k)}{\P(\e)} \tag*{\vspace{0.05in}\qedhere}.
        \end{align*}
\end{proof}
We will use this transformation law extensively to estimate $\xi_k(q)$ on the major and minor arcs. This differs significantly from the methods of Craig in \cite{craig} as he uses Euler-Maclaurin summation to estimate $\log \xi_2(q)$ (for more details, see the remark following \Cref{lemma:major-arc-xi}).

\subsection{Explicit Bounds on the Major Arc}

In this subsection, we obtain explicit bounds on the functions $L_k(r,t;q)$ and $\xi_k(q)$ on the major arc. For convenience, let $\Delta > 0$ be fixed, and set $\de \coloneqq \sqrt{1 + \Delta^2}$. Furthermore, note that for $z = \eeta + iy$ in the region $0 \leq \abs{y} \leq \Delta \eeta$ the hypothesis $\eeta < \frac{\pi}{kt\de}$ is equivalent to $\abs{z} < \frac{\pi}{kt}$.

\begin{lemma}\label{lemma:major-arc-lrt}
    Let $0 < r \leq t$ be integers and $z = \eeta + iy$ a complex number satisfying $0 \leq \abs{y} < \Delta \eeta$ as well as $\abs{z} \leq \frac{\pi}{kt}$, then we have that
    \begin{align*}
        &\abs{L_k(r,t; e^{-z}) - \frac{\log k}{tz} + \frac{k-1}{2}B_1\left(\frac r t \right) - \frac{k^2-1}{24}B_2\left(\frac r t \right)tz + \frac{k^4-1}{2880}B_4\left(\frac r t \right)t^3z^3 } \\
        &\leq \frac{1.94\de^7 k^6}{30240\pi^6}\abs{tz}^5 + \frac{0.1216k^6}{30240}\abs{tz}^5 + 0.0412k^6\abs{tz}^5,
    \end{align*}
    where $\de = \sqrt{1 + \Delta^2}$.
\end{lemma}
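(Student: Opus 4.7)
The strategy is a direct application of Proposition~\ref{prop:euler-maclaurin} to the sum representation of $L_k(r,t;q)$ supplied by Lemma~\ref{lemma:lrt-ek}. Using that lemma we rewrite
\[
L_k(r,t;e^{-z}) \;=\; \sum_{\ell \geq 0} E_k\bigl((\ell + r/t)\cdot tz\bigr),
\]
so Proposition~\ref{prop:euler-maclaurin} applies with $f = E_k$, parameter $a = r/t \in (0,1]$, and the argument $z$ of the proposition replaced by $tz$. The hypothesis $|z| \leq \pi/(kt)$ guarantees $|tz| < 2\pi/k$, which is the radius of convergence of the series expansion for $E_k$ given in \eqref{eq:ekm}, and one checks that $E_k$ and its derivatives have rapid exponential decay along any ray in the right half-plane, so $f$ has sufficient decay.

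The first task is to identify the main terms. One computes $I_{E_k} = \log k$ by regularizing: for $\varepsilon > 0$ one has $\int_\varepsilon^\infty E_k(w)\,dw = \log\bigl((1-e^{-k\varepsilon})/(1-e^{-\varepsilon})\bigr)$, which tends to $\log k$ as $\varepsilon \to 0^+$. This produces the term $\log k/(tz)$. The Taylor coefficients of $E_k$ are $c_n = e_{k,n}/n! = (1-k^{n+1})B_{n+1}/(n+1)!$, so applying Proposition~\ref{prop:euler-maclaurin} with $N = 5$ yields as polynomial part $-\sum_{n=0}^{4} c_n B_{n+1}(r/t)(tz)^n/(n+1)$. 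Because $B_3 = B_5 = 0$ only the $n = 0, 1, 3$ terms survive, and plugging in $B_1 = -\tfrac{1}{2}, B_2 = \tfrac{1}{6}, B_4 = -\tfrac{1}{30}$ gives exactly the three polynomial terms displayed in the lemma.

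The bulk of the work is controlling the two error contributions of the proposition. For the derivative-integral term $M_6 J_{E_k,6}(tz)/6!\cdot |tz|^5$, Lehmer's inequality \eqref{eq:lehmer} yields $M_6 \leq 2\zeta(6)\cdot 6!/(2\pi)^6 = 1/42$, so $M_6/6! = 1/30240$; then $J_{E_k,6}(tz)$ must be bounded by $1.94\, \delta^7 k^6/\pi^6$. To do this I would write $E_k^{(6)}(w) = \mathrm{Li}_{-6}(e^{-w}) - k^7\mathrm{Li}_{-6}(e^{-kw})$ from Lemma~\ref{lemma:ek-N-series} and apply the rational representation \eqref{eq:polylog-eulerian}. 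Parameterizing the ray as $w = s\cdot tz/|tz|$, one uses the geometric fact that $|w| \leq \delta \Re(w)$ along this ray (since $\delta = \sqrt{1+\Delta^2}$ bounds $|tz|/\Re(tz)$) to convert bounds of the form $|1-e^{-w}|^{-7}$ and $|1-e^{-kw}|^{-7}$ into usable estimates. The exponential decay at $\infty$ and the boundedness of $E_k^{(6)}$ at $0$ (since $B_7 = 0$) then render the integral finite, and careful estimation produces the $\delta^7 k^6/\pi^6$ factor. For the tail sum $\sum_{n \geq 5} |c_n|(1 + n!/10(n-5)!)|tz|^n$, one applies Lehmer \eqref{eq:lehmer} to bound $|c_n| \leq 2\zeta(n+1)k^{n+1}/(2\pi)^{n+1}$, factors out $(k|tz|/(2\pi))^{n-5}$, and uses $k|tz| \leq \pi$ (so each such factor is at most $1/2$) to obtain geometric convergence; the numerical constants $0.1216$ and $0.0412$ come from summing the resulting convergent series involving $\zeta(n+1)$ and polynomial weights in $n$.

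The main obstacle is the explicit bound on $J_{E_k,6}(tz)$: the individual terms $\mathrm{Li}_{-6}(e^{-w})$ and $k^7 \mathrm{Li}_{-6}(e^{-kw})$ each have a non-integrable $|w|^{-7}$ singularity at the origin, so one cannot simply bound the integrand term by term. One must exploit the cancellation—equivalently, the fact that the series \eqref{eq:ekm} for $E_k^{(6)}$ starts at a harmless order since $B_7 = 0$—to produce an integrable majorant near $0$, while on the far tail the exponential decay of each polylogarithm handles convergence. Obtaining the precise constant $1.94$ in the ray-integral bound, uniformly in $k$, is the delicate bit.
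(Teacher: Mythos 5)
Your proposal follows the paper's proof essentially step for step: the same application of Proposition~\ref{prop:euler-maclaurin} to $E_k$ with $a=r/t$ and argument $tz$, the same evaluation $I_{E_k}=\log k$, the same identification of the surviving polynomial terms via $B_3=B_5=0$, the same Lehmer-based treatment of the tail sum, and — crucially — the same resolution of the only delicate point, namely bounding $J_{E_k,6}$ by splitting the ray into a near-origin piece (where the Taylor series of $E_k^{(6)}$, i.e.\ the cancellation of the principal parts of the two polylogarithms, gives an integrable majorant) and a far piece (where the Eulerian rational representation and $\Re w \geq |w|/\de$ apply). The only part not carried out is the final numerical evaluation of the constants, which you correctly flag as routine but delicate; the strategy is exactly that of the paper.
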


%\vspace{-0.5in} % SPACING FIX, POSSIBLY REMOVE

\begin{proof}
    This proof applies \Cref{prop:euler-maclaurin} to $E_k(z)$, which has radius of convergence $2\pi/k$. We note that $M_6 = \frac 1 {42}$. Thus applying \Cref{prop:euler-maclaurin} to $E_k(z)$ with $a = r/t$ we have
    \begin{align*}
        &\abs{\sum_{m \geq 0} E_k\left(\left(m + \frac{r}{t}\right)z\right) - \frac{I_{E_k}}{z} + \frac{k-1}{2}B_1(a) - \frac{k^2-1}{24}B_2(a)z + \frac{k^4-1}{2880}B_4(a)z^3 } \\
        &\leq \frac{J_{E_k,6}(z)}{30240}\abs{z}^5 + \sum_{m \geq 5} \left(1 + \frac{m!}{10(m-5)!}\right)\frac{\abs{e_{k,m}}}{m!}\abs{z}^m.
    \end{align*}
    We also compute that 
    \[
        I_{E_k} = \int_0^\infty \frac{e^{-z}}{1 - e^{-z}} - \frac{ke^{-zk}}{1-e^{-zk}} \d z = -\log\left(\frac{1 - e^{-z}}{1 - e^{-zk}}\right)_{z = 0}^{z = \infty} = \log k.
    \]
    Substituting $tz$ for $z$ and putting this together with \Cref{lemma:lrt-ek} yields
    \begin{align*}
        &\abs{L_k(r,t; e^{-z}) - \frac{\log k}{tz} + \frac{k-1}{2}B_1(a) - \frac{k^2-1}{24}B_2(a)tz + \frac{k^4-1}{2880}B_4(a)t^3z^3 } \\
        &\leq \frac{J_{E_k,6}(z)}{30240}\abs{tz}^5 + \sum_{m \geq 5} \left(1 + \frac{m!}{10(m-5)!}\right)\frac{\abs{e_{k,m}}}{m!}\abs{tz}^m.
    \end{align*}
    
    % Integral Bound
    
    %% Far from zero, aka alpha = 2pi/k*az/|az|
    
    We now bound the integral $J_{E_k,6}(z)$ far from zero, using \Cref{lemma:ek-N-series} and properties of polylogarithms. Specifically we bound the following, where $\alpha = \frac{\pi z}{k\abs{z}}$:
    \begin{align*}
        \abs{\int_\alpha^\infty E_k^{(6)}(w) \d w } &\leq \int_\alpha^\infty \abs{E_k^{(6)}(w)} \abs{\d w} \leq \int_\alpha^\infty \abs{\Li_{-6}(e^{-w})} \abs{\d w} + k^7 \int_\alpha^\infty\abs{\Li_{-6}(e^{-wk})} \abs{\d w}.
    \end{align*}
    We now may use the expression of $\Li_{-6}(e^{-w})$ as a rational function of $e^{-w}$ from \cref{eq:polylog-eulerian} and the triangle inequality to see that for $x \coloneqq \Re w$ we have
    \begin{align*}
      \abs{\Li_{-6}(e^{-w})} &\leq \frac{e^{x} + 57 e^{2x} + 302 e^{3x} + 302 e^{4x} + 57 e^{5x} + e^{6x}}{(e^{x} - 1)^7}.
    \end{align*}
    Integrating this with $\beta \in \mathbb{R}_{>0}$ as the lower bound of the integral yields
    \begin{align*}
        \int_\beta^\infty \frac{e^{x} + 57 e^{2x} + 302 e^{3x} + 302 e^{4x} + 57 e^{5x} + e^{6x}}{(e^{x} - 1)^7} \d x \leq 0.04(34 + 27\cosh(\beta) + \cosh(2\beta))\csch^6(\beta/2).
    \end{align*}
    Changing variables to $w = ue^{i\theta}$ where $\arg z = \theta$ then to $x = u\cos \theta$ we then have that
    \begin{align*}
        \int_\alpha^\infty \abs{\Li_{-6}(e^{-w})} \abs{\d w} \leq \frac{1}{\cos \theta}\int_{\abs{\alpha}\cos \theta}^\infty \frac{e^{x} + 57 e^{2x} + 302 e^{3x} + 302 e^{4x} + 57 e^{5x} + e^{6x}}{(e^{x} - 1)^7} \d x.
    \end{align*}
    Therefore, since $\cos \theta \geq 1/\de$ for $z = \eeta + iy$ satisfying $\abs{y} < \Delta \eeta$, we compute
    \begin{align*}
        \int_\alpha^\infty \abs{\Li_{-6}(e^{-w})} \abs{\d w} &\leq  0.04\de(34 + 27\cosh(\abs{\alpha}/\de) + \cosh(2\abs{\alpha}/\de))\csch^6(\abs{\alpha}/2\de) \\
        &\leq 4.54\de\csch^6(\abs{\alpha}/2\de) \leq \frac{4.54\cdot 2^6\de^7k^6}{\pi^6}.
    \end{align*}
    Similarly, with a simple change of variables $\omega = wk$ we have that
    \begin{align*}
        \int_\alpha^\infty\abs{\Li_{-6}(e^{-wk})} \abs{\d w} &= \frac{1}{k}\int_{k\alpha} \abs{\Li_{-6}(e^{-\omega})} \abs{\d \omega} \\
        &\leq 0.04\de(34 + 27\cosh(\pi/\de) + \cosh(2\pi/\de))\csch^6(\pi/2\de) \leq \frac{24.6\cdot 2^6 \de^7}{k\pi^6}.
    \end{align*}
    Combining these two calculations yields the following estimate for the integral away from zero:
    \begin{align*}
        \abs{\int_\alpha^\infty E^{(6)}(w) \d w} \leq \frac{29.14 \cdot 2^6\de^7k^6}{\pi^6} \leq 1.94\de^7 k^6.
    \end{align*}
    %% Near zero, using power series
    For the near 0 part we use the series expansion of $E_k^{(6)}$ near zero from \Cref{lemma:ek-N-series}, which gives
\begin{align*}
\int_0^{\alpha} \abs{E_k^{(6)}(w)}\abs{\d w}
&=
\int_0^{\alpha}
\sum_{m=0}^\infty \frac{|B_{m+7}|\cdot (k^{m+7} - 1)\cdot |w|^m}{(m+7)\cdot m!}\abs{\d w}.
\end{align*}
Applying Lehmer's Bound \eqref{eq:lehmer} and that $B_{m+7}$ vanishes for $m$ even, we see that
\begin{align*}
\int_0^{\alpha} \abs{E_k^{(6)}(w)}\abs{\d w} &\leq
\int_0^{\alpha}
\sum_{m \text{ odd}}
\frac{2(m+7)!\zeta(m+7)\cdot (k^{m+7} - 1)}{(2\pi)^{m+7}(m+7)\cdot m!}\abs{w}^m \abs{\d w}\\
&\leq
\int_0^{\alpha}
\sum_{m \text{ odd}}
\frac{2(m+1)\dots(m+6)\cdot \zeta(8)\cdot k^7}
{(2\pi)^7}\left(\frac{k\abs{w}}{2\pi}\right)^m \abs{\d w}.
\end{align*}
Because $E_k^{(6)}(z)$ has no pole at $z = 0$, we use Fubini's theorem to exchange the sum and the integral to obtain that
\begin{align*}
\int_0^{\alpha} \abs{E_k^{(6)}(w)}\abs{\d w} &\leq
\sum_{m \text{ odd}}
\frac{2\zeta(8)k^7}{(2\pi)^7}
\int_0^{\alpha}
(m+1)\dots(m+6)\left(\frac{k\abs{w}}{2\pi}\right)^m \abs{\d w}\\
&=
\frac{2\zeta(8)k^7}{(2\pi)^7}
\sum_{m \text{ odd}}
\int_0^{\frac12}
(m+1)\dots(m+6)u^m\frac{2\pi\d u}{k} \\
&=
\frac{2\zeta(8)k^6}{(2\pi)^6}
\sum_{m \text{ odd}}
(m+2)\dots(m+6)\left(\frac12 \right)^{m+1} \leq
0.1216k^6,
\end{align*}
where we use the change of variables $u = \frac{k\abs w}{2\pi}$ and the final sum has been calculated numerically.
    % Sum Bound
    
    We now bound the sum over coefficients appearing on the right hand side of the inequality. Using \cref{eq:ekm,eq:lehmer} we have that
    \begin{align*}
        \abs{\frac{e_{k,m}}{m!}} &\leq \frac{\zeta(m+1)(1+k^{m+1})}{(2\pi)^{m+1}} \leq \frac{\pi^2(1+k^{m+1})}{6(2\pi)^{m+1}} \leq \frac{\pi^2}{3} \left(\frac{k}{2\pi}\right)^{m+1}.
    \end{align*}
    Therefore, we have
    \begin{align*}
        \sum_{m \geq 5} \left(1 + \frac{m!}{10(m-5)!}\right)\frac{\abs{e_{k,m}}}{m!}\abs{tz}^{m-5} &\leq \frac{\pi^2k^6}{3(2\pi)^6} \sum_{m \geq 5} \left(1 + \frac{m!}{10(m-5)!}\right)\left(\frac{k\abs{tz}}{2\pi}\right)^{m-5} \\
        &\leq \frac{\pi^2k^6}{3(2\pi)^6} \sum_{m \geq 5} \left(1 + \frac{m!}{10(m-5)!}\right)\frac{1}{2^{m-5}} \leq 0.0412k^6.
    \end{align*}
    Combining these bounds for the sum and the integral, we see that
    \begin{align*}
        &\abs{L_k(r,t; e^{-z}) - \frac{\log k}{tz} + \frac{k-1}{2}B_1(a) - \frac{k^2-1}{24}B_2(a)tz + \frac{k^4-1}{2880}B_4(a)t^3z^3 } \\
        &\leq \frac{1.94\de^7 k^6}{30240\pi^6}\abs{tz}^5 + \frac{0.1216k^6}{30240}\abs{tz}^5 + 0.0412k^6\abs{tz}^5. \tag*{\qedhere}
    \end{align*}
\end{proof}
For later use, we also provide a bound on $\abs{L_k(r,t;e^{-z})}$ on the major arc.
%\vspace{-0.15in}
\begin{corollary}\label{cor:major-arc-lrt-absolute}
    Let $0 < r \leq t$ be integers and $z = \eeta + iy$ a complex number satisfying $0 \leq \abs{y} < \Delta \eeta$ as well as $\eeta < \frac{\pi}{kt\de}$, then we have
    \[
        \abs{L_k(r,t;e^{-z})} \leq \frac{41+\log k}{\abs{tz}} + \frac{1.94\de^7}{30240\abs{tz}}.
    \]
\end{corollary}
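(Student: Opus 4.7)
The plan is to deduce the absolute bound directly from the approximation in \Cref{lemma:major-arc-lrt} via a triangle-inequality argument, converting every polynomial-in-$|tz|$ term into one of the form $C/|tz|$ by exploiting the smallness of $|tz|$ guaranteed by the hypotheses.

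The crucial observation is that the hypotheses $|y| < \Delta \eeta$ and $\eeta < \frac{\pi}{kt\de}$ together force $|z| < \de\eeta < \frac{\pi}{kt}$, so $|tz| < \frac{\pi}{k}$. Consequently for any integer $j \geq 0$ one has the ``exchange'' inequality $|tz|^j \leq \frac{(\pi/k)^{j+1}}{|tz|}$, which is exactly what is needed to absorb each polynomial term into the target shape. Applying the triangle inequality to \Cref{lemma:major-arc-lrt}, I then bound the Bernoulli polynomial values on $[0,1]$ via the elementary estimates $|B_1(r/t)| \leq \tfrac12$, $|B_2(r/t)| \leq \tfrac16$, and $|B_4(r/t)| \leq \tfrac{1}{30}$, and observe that after the exchange each coefficient of the form $(k^j-1)/k^{j+1}$ (and, for the error terms, $k^6/k^6$) is bounded by $1$, so the whole bound becomes independent of $k$ except through the $\log k$ numerator. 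The $\de^7$-dependent error term already appears in the stated form and is simply passed through.

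The only substantive work is a short numerical calculation verifying that
\[
\frac{\pi}{4} + \frac{\pi^2}{144} + \frac{\pi^4}{86400} + \frac{0.1216\,\pi^6}{30240} + 0.0412\,\pi^6 \;\leq\; 41,
\]
where the five summands correspond to the three Bernoulli contributions and the two ``non-$\de$'' error terms arising from \Cref{lemma:major-arc-lrt}. The dominant summand is $0.0412\,\pi^6 \approx 39.6$, so there is ample slack; this numerical bookkeeping is the main (and only) obstacle, and it is entirely routine. Combining the verified numerical inequality with the $\de^7$-term $\frac{1.94\,\de^7}{30240\,|tz|}$ yields the claimed bound $\frac{41 + \log k}{|tz|} + \frac{1.94\,\de^7}{30240\,|tz|}$.
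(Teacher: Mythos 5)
Your proposal is correct and follows essentially the same route as the paper: triangle inequality applied to \Cref{lemma:major-arc-lrt}, the standard bounds $\abs{B_1}\leq\tfrac12$, $\abs{B_2}\leq\tfrac16$, $\abs{B_4}\leq\tfrac1{30}$ on $[0,1]$, and the absorption of each term via $\abs{tz}<\pi/k$, with the same dominant contribution $0.0412\,\pi^6\approx 39.6$ in the final numerical check. No gaps.
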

\vspace{-0.2in}
\begin{proof}
    We know from \Cref{lemma:major-arc-lrt} that
    \begin{align*}
        \abs{L_k(r,t;e^{-z})} &\leq \frac{\log k}{t\abs{z}} + \frac{k-1}{2}\abs{B_1\left(\frac r t \right)} + \frac{k^2 - 1}{24}\abs{B_2\left(\frac r t \right)}t\abs{z} + \frac{k^4 - 1}{2880}B_4\left(\frac r t \right)t^3\abs{z}^3 \\
        &\quad \quad + \frac{1.94\de^7 k^6}{30240\pi^6}\abs{tz}^5 + \frac{0.1216k^6}{30240}\abs{tz}^5 + 0.0412k^6\abs{tz}^5.
    \end{align*}
    Using the trivial bound on $B_1(x) = x - \frac12$ and Lehmer's bound we see that
    \begin{align*}
        \abs{L_k(r,t;e^{-z})} &\leq \frac{\log k + \frac{k-1}{4}\abs{tz} + \frac{k^2-1}{144}\abs{tz}^2 + \frac{k^4 - 1}{86400}\abs{tz}^4 + \frac{1.94\de^7 k^6}{30240\pi^6}\abs{tz}^6 + \frac{0.1216k^6}{30240}\abs{tz}^6 + 0.0412k^6\abs{tz}^6}{\abs{tz}}.
    \end{align*}
    Applying the restriction $\abs{tz} < \frac{\pi}{k}$ we have that
    \begin{align*}
        \abs{L_k(r,t;e^{-z})}&< \frac{41 + \log k}{\abs{tz}} + \frac{1.94\de^7}{30240\abs{tz}}. \tag*{\qedhere}
    \end{align*}
\end{proof}
Before we bound $\xi_k$ on the major arc, we require an elementary lemma concerning sums of exponentials which may be bounded by geometric series.
\begin{lemma}\label{lemma:mvt-geom}
    Suppose we have a series of the form $\sum_{m \geq b} e^{f(x)}$ such that $f'(x)$ is decreasing and $f'(b) < 0$ then
    \[
        \sum_{m \geq b} e^{f(x)} \leq \frac{e^{f(b)}}{1 - e^{f'(b)}}.
    \]
\end{lemma}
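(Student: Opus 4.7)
The plan is to exploit the monotonicity of $f'$ via the mean value theorem to bound the exponents linearly in $m$, then recognize the resulting sum as a geometric series. The statement of the lemma appears to have a small typo (the summand should read $e^{f(m)}$ rather than $e^{f(x)}$), and this is the interpretation under which the bound makes sense.

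First I would fix an integer $m \geq b$ and apply the mean value theorem to $f$ on the interval $[b, m]$, obtaining some $\xi_m \in (b, m)$ with $f(m) - f(b) = (m - b)\, f'(\xi_m)$. Since $\xi_m > b$ and $f'$ is decreasing, $f'(\xi_m) \leq f'(b) < 0$, so that $f(m) \leq f(b) + (m - b)\, f'(b)$. (The case $m = b$ is trivial.) Exponentiating gives the pointwise bound $e^{f(m)} \leq e^{f(b)} \cdot e^{(m-b) f'(b)}$.

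Summing this inequality over $m \geq b$ and reindexing via $j = m - b$ yields
\[
    \sum_{m \geq b} e^{f(m)} \;\leq\; e^{f(b)} \sum_{j \geq 0} e^{j f'(b)} \;=\; \frac{e^{f(b)}}{1 - e^{f'(b)}},
\]
where the geometric series converges because $f'(b) < 0$ forces $e^{f'(b)} < 1$. There is no genuine obstacle here; the only small care needed is in verifying that the MVT output $f'(\xi_m) \leq f'(b)$ is in the correct direction (which it is, precisely because $f'$ is decreasing and $\xi_m \geq b$) so that the pointwise bound has $f'(b)$, not $f'(\xi_m)$, in the exponent — this is what makes the sum collapse to a clean geometric series independent of $m$.
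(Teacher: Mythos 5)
Your proof is correct and is essentially the same as the paper's: both use the mean value theorem together with the monotonicity of $f'$ to obtain the pointwise bound $f(m) \leq f(b) + (m-b)f'(b)$ and then sum the geometric series (the paper applies the MVT on each unit interval $[m,m+1]$ and telescopes, while you apply it once on $[b,m]$ — an immaterial difference). You are also right that the summand in the statement should be read as $e^{f(m)}$.
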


\begin{proof}
    We have by the mean value theorem that for each $m \geq b$, $f(m+1)-f(m) = f'(c) \leq f'(b)$ for $c$ between $m$ and $m + 1$. Therefore, using the geometric series formula,
    \begin{align*}
        \sum_{m \geq b} e^{f(x)} \leq \sum_{m \geq b} e^{f(b)}e^{(m-b)f'(b)} = \frac{e^{f(b)}}{1-e^{f'(b)}}. \tag*{\qedhere}
    \end{align*}
\end{proof}
Applying this along with \Cref{lemma:xi-transform} and Euler's pentagonal number theorem (see \eqref{eq:pentagonal}) yields the following bound.
\begin{lemma}\label{lemma:major-arc-xi}
    Let $z = \eeta + iy$ be a complex number satisfying $0 \leq \abs{y} \leq \Delta \eeta$ with $0 \leq \eeta < \frac{4\pi^2}{2.35\de}$, then
    \begin{align*}
    \abs{\xi_k(q) - \Phi_k(z)} \leq 7\abs{\Phi_k(z)}\cdot \abs{\exp\left(\frac{-4\pi^2}{kz}\right)}
    \end{align*}
    where
    \begin{align*}
        \Phi_k(z) \coloneqq \frac{1}{\sqrt{k}}\exp\left(\frac{\pi^2}{6z}\left(1 - \frac{1}{k}\right) + \frac{z}{24}(k-1)\right).
    \end{align*}
%    We also have that as $z$ approaches zero,
%    \begin{align*}
%        \log \xi_k(q) - \log \Phi_k(z) &\sim \frac{\pi^2}{6}\left(\frac{1}{1-\exp\left(-4\pi^2/z\right)} - \frac{1}{1 - \exp\left(-4\pi^2/kz\right)}\right) \sim \frac{-z(k-1)}{24}.
%    \end{align*}
\end{lemma}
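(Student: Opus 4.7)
The plan is to apply \Cref{lemma:xi-transform} to factor $\xi_k(q) = \Phi_k(z)\cdot \P(\e^k)/\P(\e)$, which reduces the claim to showing
\[
\abs{\frac{\P(\e^k)}{\P(\e)} - 1} \leq 7\abs{\e}.
\]
Setting $X \coloneqq \P(\e^k) - 1$ and $Y \coloneqq \P(\e)^{-1} - 1$, the identity $\P(\e^k)\P(\e)^{-1} = (1+X)(1+Y)$ shows the left-hand side equals $\abs{X + Y + XY}$, so it suffices to prove $\abs{X} + \abs{Y} + \abs{X}\abs{Y} \leq 7\abs{\e}$.

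For the $Y$ piece I would apply Euler's pentagonal number theorem \eqref{eq:pentagonal} directly, which yields
\[
\abs{Y} \leq \sum_{m\geq 1}\left(\abs{\e}^{m(3m-1)/2} + \abs{\e}^{m(3m+1)/2}\right),
\]
and then invoke \Cref{lemma:mvt-geom} to dominate this by a geometric tail in $\abs{\e}$. Since the exponents grow quadratically in $m$, the leading contribution is $\abs{\e} + \abs{\e}^2$ and one obtains $\abs{Y} \leq C_Y\abs{\e}$ for a small explicit constant $C_Y$ close to $1$.

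For the $X$ piece, I would use that $\P$ has nonnegative Taylor coefficients to get $\abs{X} \leq \P(\abs{\e}^k) - 1 = \sum_{n\geq 1}p(n)\abs{\e}^{nk}$, then insert the subexponential estimate \eqref{eq:subexp-p}. The hypothesis $\eta < 4\pi^2/(2.35\de)$ together with $\abs{z}^2 \leq \de^2\eta^2$ (which holds because $\abs{y} \leq \Delta\eta$) forces
\[
\log(1/\abs{\e}) = (4\pi^2/k)\cdot \eta/\abs{z}^2 \geq \frac{4\pi^2}{k\de^2\eta} > \frac{2.35}{k\de},
\]
which ensures the exponent $f(n) = (\pi^2/6+1)\sqrt n - nk\log(1/\abs{\e})$ is strictly decreasing on $n \geq 1$. \Cref{lemma:mvt-geom} then bounds the sum by a geometric series in terms of its leading term $p(1)\abs{\e}^k$, producing $\abs{X} \leq C_X\abs{\e}^k \leq C_X\abs{\e}$.

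The main obstacle is quantitative rather than conceptual: one must track the geometric-series constants carefully to check $C_X + C_Y + C_XC_Y\abs{\e} \leq 7$, and the specific threshold $2.35$ appearing in the hypothesis on $\eta$ should be chosen precisely so that $\abs{\e}$ is small enough both to validate the monotonicity of $f(n)$ needed for \Cref{lemma:mvt-geom} and to absorb the cross term $C_XC_Y\abs{\e}$ while leaving room for the explicit constant $7$ on the right.
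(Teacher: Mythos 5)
Your proposal follows essentially the same route as the paper: factor $\xi_k(q)=\Phi_k(z)\P(\e^k)/\P(\e)$ via \Cref{lemma:xi-transform}, expand the product as $X+Y+XY$, bound $Y=\P(\e)^{-1}-1$ by the pentagonal number theorem and $X=\P(\e^k)-1$ by the subexponential bound \eqref{eq:subexp-p} together with \Cref{lemma:mvt-geom}, and then track constants using the smallness of $\abs{\e}$ guaranteed by the hypothesis on $\eta$. The remaining work is exactly the numerical bookkeeping you identify, which the paper carries out to reach the constant $7$.
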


\begin{proof}
    Recalling that $\e = e^{-4\pi^2/kz}$, \Cref{lemma:xi-transform} gives
    \begin{align*}
        \xi_k(q) = \Phi_k(z) \cdot \frac{\P(\e^k)}{\P(\e)} = \Phi_k(z) \cdot \left(1 + \sum_{m \geq 1} (-1)^m\left(\e^{\frac{m(3m+1)}{2}} + \e^{\frac{m(3m-1)}{2}}\right)\right)\left(1 + \sum_{m \geq 1} p(m)\e^{km}\right).
    \end{align*}
    %This immediately implies the asymptotic for $\log \xi_k(q) - \log \Phi_k(z)$ using \cref{eq:logp-asym}, and then a quick calculation that $u \sim 1 - e^{-u}$ yields the equivalent form to the right.
    Expanding using Euler's pentagonal number theorem (see \eqref{eq:pentagonal}) yields
    \begin{align*}
        \frac{\xi_k(q) - \Phi_k(z)}{\Phi_k(z)} &= \sum_{m \geq 1} (-1)^m\left(\e^{m(3m+1)/2} + \e^{m(3m-1)/2}\right) \cdot \left(1 + \sum_{m \geq 1} p(m)\e^{km}\right) \\
        &\quad\quad + \sum_{m \geq 1} p(m)\e^{km}\left(1 + \sum_{m \geq 1}(-1)^m\left(\e^{m(3m+1)/2} + \e^{m(3m-1)/2}\right)\right).
    \end{align*}
    We bound each of the sums above individually. The fact that
    \begin{align*}
        \prod_{m \geq 1} (1 - \e^m) &= 1 + \sum_{m \geq 1} (-1)^m\left(\e^{m(3m+1)/2} + \e^{m(3m-1)/2}\right)
    \end{align*}
    gives
    \begin{align*}
        \abs{-1 + \prod_{m \geq 1} (1 - \e^m)} &= \abs{\sum_{m \geq 1} (-1)^m\left(\e^{m(3m+1)/2} + \e^{m(3m-1)/2}\right)} \leq \abs{\e} + \sum_{m \geq 2} \abs{\e}^m &= \abs{\e} + \frac{\abs{\e}^2}{1-\abs{\e}}.
    \end{align*}
    When $\eeta < \frac {2\pi^2}{\delta^2k}$, we have
    \begin{align*}
        \abs\e = \exp\lr{-\Re\lr{\frac{4\pi^2}{kz}}} = \exp\lr{-\frac{4\pi^2\eeta}{k\abs z^2}} \leq \exp\lr{-\frac{4\pi^2}{k\delta^2\eeta}} \leq \exp(-2) \leq \frac17.
    \end{align*}
    Thus we have
    \[
    \frac{\abs{\e}^2}{1-\abs\e} \leq 1.17\abs\e^2.
    \]
    % Bounding partition sum.
    Using \eqref{eq:subexp-p}, we may write
    \begin{align*}
        \abs{\sum_{m \geq 1} p(m) \e^{km}} \leq \sum_{m \geq 1} e^{2.7\sqrt{m}} \abs{\e}^{km} = \sum_{m \geq 1} e^{2.7\sqrt{m} - 4\pi^2m\Re(1/z)}.
    \end{align*}
    Now applying $\Re(1/z) = \frac{\eeta}{\abs{z}^2} \geq \frac{1}{\de\eeta}$ and differentiating with respect to $m$, we see that
    \[
        \frac{\d}{\d m} \left(2.7\sqrt{m} - 4\pi^2m\Re(1/z)\right) = \frac{1.35}{\sqrt{m}} - 4\pi^2\Re\left(\frac{1}{z}\right) \leq 1.35 - \frac{4\pi^2}{\de\eeta} \leq -1.
    \]
    Therefore we may apply \Cref{lemma:mvt-geom} to get $\abs{\P(\e^k) - 1} \leq 23.6\abs{\e}^k$.
   % \begin{align*}
   %     \abs{\sum_{m \geq 1} p(m) \e^{km}} &\leq \frac{e^{2.7-4\pi^2\Re(1/z)}}{1 - e^{-1}} \leq 
   %     23.6\abs{\e}^k.
   % \end{align*}
    Combining these, we obtain
    \begin{align*}
    \abs{\frac{\P(\e^k)}{\P(\e)} - 1} &\leq |\e| + \frac{\abs \e^2}{1 - \abs\e} + 23.6 \abs\e^k + 23.6\abs\e^k\lr{\abs\e + \frac{\abs\e^2}{1 - \abs\e}} \\
    &\leq \abs\e + 25.6\abs\e^2 + 11\abs\e^2 \leq 7\abs\e,
    \end{align*}
    which gives
    \[
    \abs{\xi_k(q) - \Phi_k(z)} \leq 7\abs{\Phi_k(z)}\cdot \abs{\exp\lr{\frac{-4\pi^2}{kz}}}
    \]
    as desired.
\end{proof}

\begin{remark}
    This bound differs significantly from that derived by Craig in \cite{craig}. Specifically, Craig bounds $\abs{\log \xi_2(q) - \log \Phi_2(q)}$ by $35\abs{z}^2$ using Euler-Maclaurin summation. In contrast, using modularity of $\xi_k$, we obtain exponential savings on the error term in the lemma above. For a direct comparison to the bound derived by Craig for $\abs{\xi_2(q) - \Phi_2(q)}$, see Corollary 4.4 of \cite{craig}.
\end{remark}

\subsection{Explicit Bounds on the Minor Arc}

We now proceed to bound $\xi_k(e^{-z})$ and $L_k(r,t;e^{-z})$ on the minor arc, where for $z = \eeta + iy$ we have $\Delta \eeta \leq \abs{y} < \pi$.

\begin{lemma}\label{lemma:minor-arc-xi}
    Let $t \geq 2$ be an integer. Assume that $z = \eeta + iy$ satisfies $\Delta \eeta \leq \abs{y} \leq \pi$ and $0 < \eeta < \frac{12 \log k}{k}$ (which is always true when $\eeta < \frac\pi{kt\de}$), then we have that
    \[
        \abs{\xi_k(e^{-z})} \leq 3\exp\left(\frac{\pi^2K}{6\eeta}\left(\frac{1}{2} + \frac{3}{\pi^2} + \frac{6}{\pi^2\de^2}\right)\right).
    \]
\end{lemma}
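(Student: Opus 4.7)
The plan is to apply Lemma~\ref{lemma:xi-transform} to write
\[
\xi_k(e^{-z}) = \Phi_k(z) \cdot \frac{\P(\e^k)}{\P(\e)}, \qquad \e = \exp(-4\pi^2/(kz)),
\]
where $\Phi_k(z) = (1/\sqrt k)\exp(\pi^2 K/(6z) + z(k-1)/24)$, and then to bound each factor on the minor arc.

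First, I would bound $|\Phi_k(z)|$. Since $|y| \geq \Delta\eta$ gives $|z|^2 = \eta^2 + y^2 \geq \delta^2\eta^2$, we have $\Re(1/z) = \eta/|z|^2 \leq 1/(\delta^2\eta)$. The hypothesis $\eta < 12\log k/k$ ensures $\eta(k-1)/24 \leq (k-1)\log k/(2k) \leq \tfrac{1}{2}\log k$, so $\exp(\eta(k-1)/24)/\sqrt k \leq 1$, yielding
\[
|\Phi_k(z)| \leq \exp\lr{\frac{\pi^2 K}{6\delta^2\eta}}.
\]

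Next, I would estimate $|\P(\e^k)/\P(\e)|$ via crude triangle-inequality bounds. For the numerator use $|\P(q)| \leq \P(|q|)$; for the denominator use
\[
|\P(\e)|^{-1} = \Big|\prod_{n\geq 1}(1-\e^n)\Big| \leq \prod_{n\geq 1}(1+|\e|^n) = \frac{\P(|\e|)}{\P(|\e|^2)}.
\]
Thus $|\P(\e^k)/\P(\e)| \leq \P(|\e|^k)\P(|\e|)/\P(|\e|^2)$. Applying the absolute bound $\log \P(x) \leq \pi^2/(6(1-x))$ from \eqref{eq:logp-absolute} and $\P(|\e|^2) \geq 1$ reduces matters to controlling $\pi^2/(6(1-|\e|^k)) + \pi^2/(6(1-|\e|))$, with $|\e| \leq \exp(-4\pi^2/(k\delta^2\eta))$ at the inner boundary of the minor arc.

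Finally, I would combine the two estimates, using the hypothesis $\eta < 12\log k/k$ to control the size of $|\e|$ and the secondary constants. The factor $3$ in the target absorbs the contributions of these constants, the term $K/(2\eta)$ accounts for the $\P$-ratio, and together with $K/(\delta^2\eta)$ they compensate for the excess in the $|\Phi_k|$ bound (the elementary inequality $\pi^2 K/(12\eta) + K/(\delta^2\eta) \geq \pi^2 K/(6\delta^2\eta)$ for $\delta \geq 1$ is what makes the target exponent accommodate the $|\Phi_k|$ estimate).

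The main obstacle is confirming that the combined bound fits under the stated exponent in the worst regime, namely when $|\e|$ is close to $1$; this occurs when $|y|$ is near $\pi$ (so that $|z|^2 \approx \pi^2$) and $k$ is large. In this regime $|\Phi_k(z)|$ becomes small (close to $1/\sqrt k$), giving room to absorb a large ratio $|\P(\e^k)/\P(\e)|$, but the balance is delicate. The threshold $\eta < 12\log k/k$ is precisely calibrated so that the worst-case estimate in each sub-regime remains within the target exponent $\frac{\pi^2 K}{6\eta}(\tfrac12 + \tfrac3{\pi^2} + \tfrac6{\pi^2\delta^2})$.
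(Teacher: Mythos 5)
Your overall strategy --- apply \Cref{lemma:xi-transform} and then bound $\abs{\Phi_k(z)}$ and the ratio $\P(\e^k)/\P(\e)$ separately --- is exactly the paper's, and your treatment of $\abs{\Phi_k(z)}$ (via $\Re(1/z)\le 1/(\de^2\eeta)$ and the hypothesis $\eeta<12\log k/k$) and of the numerator $\abs{\P(\e^k)}\le \P(\abs{\e}^k)$ both match the paper and are fine. The gap is your bound on the denominator. You estimate
\[
\abs{\P(\e)}^{-1}=\Bigl|\prod_{n\ge1}(1-\e^n)\Bigr|\le\prod_{n\ge1}(1+\abs{\e}^n)=\frac{\P(\abs{\e})}{\P(\abs{\e}^2)}
\]
and then apply $\log\P(x)\le\pi^2/(6(1-x))$. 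This is exponentially too lossy in precisely the regime you flag as delicate. Near the outer edge of the minor arc, $\abs{y}$ close to $\pi$, one has $\Re(1/z)=\eeta/\abs{z}^2\approx\eeta/\pi^2$, hence $\abs{\e}\approx e^{-4\eeta/k}$ and $1-\abs{\e}\approx 4\eeta/k$, so your denominator bound contributes a factor of roughly $\exp\lr{\frac{\pi^2 k}{24\eeta}}$. But $\frac{\pi^2 k}{24}\ge\frac{\pi^2 K}{6}=\frac{\pi^2(k-1)}{6k}$ for every $k$ (with equality only at $k=2$), so this one factor already saturates or exceeds the \emph{entire} exponent $\frac{\pi^2K}{6\eeta}$ that the minor-arc estimate \eqref{eq:asym-minor} must beat; adding the numerator's $\approx\frac{\pi^2}{24\eeta}$ and the $\Phi_k$ contribution pushes the total strictly past $\frac{\pi^2K}{6\eeta}$ for all $k\ge2$, so the claimed inequality cannot follow and the resulting bound would be useless in the circle method. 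Replacing $\pi^2/(6(1-x))$ by the sharper $\sum_n\log(1+x^n)\le x/(1-x)$ only improves the constant to $k/4$, which still grows linearly in $k$ and fails for $k\ge 7$.

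The missing idea is Euler's pentagonal number theorem \eqref{eq:pentagonal}: the exponents in $\prod_{n\ge1}(1-\e^n)$ are sparse, so
\[
\abs{\P(\e)^{-1}}\le 1+\abs{\e}+\frac{\abs{\e}^2}{1-\abs{\e}},
\]
which is only \emph{polynomially} large in $1/(1-\abs{\e})$ rather than exponentially large. This is how the paper proceeds (via the quantity $S_k(z)$), and the resulting contribution is absorbed into the constant $3$ and the $\frac{3}{\pi^2}$ term of the exponent. Taking termwise absolute values inside the product, as you do, discards exactly the cancellation that keeps $1/\P(\e)$ small.
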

\begin{proof}
    Using the formula obtained in \Cref{lemma:xi-transform}, we have
    \begin{align*}
        \xi_k(e^{-z}) &= \sqrt{\frac{1}{k}}\exp\left(\frac{\pi^2}{6z}\left(1 - \frac{1}{k}\right) + \frac{z}{24}(k-1)\right)\frac{\P(e^{-4\pi^2/z})}{\P(e^{-4\pi^2/kz})}.
    \end{align*}
    Taking absolute values then yields
    \begin{align*}
        \abs{\xi_k(e^{-z})} &= \sqrt{\frac{1}{k}}\exp\left(\frac{\pi^2\Re(1/z)}{6}\left(1 - \frac{1}{k}\right) + \frac{\eeta}{24}(k-1)\right)\frac{\abs{\P(e^{-4\pi^2/z})}}{\abs{\P(e^{-4\pi^2/kz})}} \\
        &\leq \sqrt{\frac{1}{k}}\exp\left(\frac{\pi^2\Re(1/z)}{6}\left(1 - \frac{1}{k}\right) + \frac{\eeta}{24}(k-1)\right)\P(e^{-4\pi^2\Re(1/z)})S_k(z),
    \end{align*}
    where
    \begin{align*}
        S_k(z) &\coloneqq 1 + e^{-4\pi^2\Re(1/kz)} + \frac{e^{-8\pi^2\Re(1/kz)}}{1 - e^{-4\pi^2\Re(1/kz)}} \geq \frac{1}{\P(e^{-4\pi^2/kz})}.
    \end{align*}
    For the lower bound $S_k(z) \geq (\P(e^{-4\pi^2/kz}))^{-1}$, we use the pentagonal number theorem in a similar way to the technique seen in \Cref{lemma:major-arc-xi}. Now we have that
    \begin{align*}
        \log \abs{\frac{\xi_k(e^{-z})}{S_k(z)}} = -\frac{1}{2}\log k + \frac{\pi^2\Re(1/z)}{6}\left(1 - \frac{1}{k}\right) + \frac{\eeta}{24}(k-1) + \log \P(e^{-4\pi^2\Re(1/z)}).
    \end{align*}
    For $\mu > 0$, we have $ \Re(1/z) = \frac{\eeta}{\abs{z}^2} \leq \frac{1}{\eeta \de^2}$ and
    \begin{align*}
        \log \P(e^{-\mu}) &= \sum_{m \geq 1} \frac{e^{-m\mu}}{m(1-e^{-m\mu})} \leq \frac{\pi^2e^{-\mu}}{6(1-e^{-\mu})} \leq \frac{\pi^2}{6\mu},
    \end{align*}
    where the second line follows from \cref{eq:logp-absolute}. Using these bounds, we obtain
    \begin{align*}
        \log \abs{\frac{\xi_k(e^{-z})}{S_k(z)}} &\leq -\frac{\log k}{2} + \frac{\pi^2}{6\eeta\de^2}\left(1 - \frac{1}{k}\right) + \frac{\eeta}{24}(k-1) + \frac{\abs{z}^2}{24\eeta} \\
        &\leq-\frac{\log k}{2} + \frac{\pi^2}{24\eeta}\left(1 + \frac{4(1-1/k)}{\de^2}\right) + \frac{k\eeta}{24} \leq \frac{\pi^2}{24\eeta}\left(1 + \frac{4(1-1/k)}{\de^2}\right).
    \end{align*}
    We now trivially bound $S_k(z)$ to see that
    \begin{align*}
        S_k(z) &\leq 2 + \frac{e^{-4\pi^2 \Re(1/kz)}}{4\pi^2\Re(1/kz)} \leq 2 + e^{\frac{1}{4\pi^2\Re(1/kz)}} \leq 2 + e^{1/(4\eeta)} \leq 3e^{1/(4\eeta)}.
    \end{align*}
    Therefore, we have that
    \begin{align*}
        \abs{\xi_k(e^{-z})} \leq 3\exp\left(\frac{\pi^2}{24\eeta}\left(1 + \frac{4(1-1/k)}{\de^2} + \frac{6}{\pi^2}\right)\right).
    \end{align*}
    Using that $k \geq 2$ quickly yields the second inequality in the statement above.
\end{proof}

\begin{lemma}\label{lemma:minor-arc-lrt}
    Let $k,t \geq 2$ and $1 \leq r \leq t$. Assuming that $z = \eeta + iy$ satisfies $0 < \eeta < \frac{1}{k}$, we have
    \[
        \abs{L_k(r,t;e^{-z})} \leq \frac{3.1}{\eeta^2}.
    \]
\end{lemma}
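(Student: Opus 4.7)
The plan is to bound $\abs{L_k(r,t;e^{-z})}$ by applying the triangle inequality to the defining series and reducing to an elementary positive real sum that can be summed in closed form.

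First, starting from the definition
\[
L_k(r,t;q) = \sum_{m \equiv r \Mod{t}} \left(\frac{q^m}{1-q^m} - \frac{kq^{km}}{1-q^{km}}\right),
\]
I would apply the triangle inequality. For each $m \geq 1$ and $q = e^{-z}$ with $\Re z = \eeta > 0$, we have $\abs{q^m} = e^{-m\eeta} < 1$, so the reverse triangle inequality gives $\abs{1-q^m} \geq 1 - e^{-m\eeta}$, and hence $\abs{\tfrac{q^m}{1-q^m}} \leq \tfrac{1}{e^{m\eeta}-1}$. The analogous bound holds for the $q^{km}$ term. Enlarging the sum over $m \equiv r \Mod{t}$ to a sum over all $m \geq 1$, which is legitimate since every summand is nonnegative, yields
\[
\abs{L_k(r,t;e^{-z})} \leq \sum_{m \geq 1} \frac{1}{e^{m\eeta}-1} + k\sum_{m \geq 1} \frac{1}{e^{km\eeta}-1}.
\]

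Next, for $S(\alpha) := \sum_{m \geq 1} \tfrac{1}{e^{m\alpha}-1}$ with $\alpha > 0$, I would use the telescoping inequality $e^{m\alpha}-1 \geq e^{m\alpha} - e^{(m-1)\alpha} = e^{(m-1)\alpha}(e^\alpha - 1)$, which holds whenever $m \geq 1$. Summing the resulting geometric series gives
\[
S(\alpha) \leq \frac{1}{e^\alpha-1}\sum_{m \geq 1} e^{-(m-1)\alpha} = \frac{1}{(e^\alpha-1)(1-e^{-\alpha})} = \frac{1}{2(\cosh\alpha - 1)} \leq \frac{1}{\alpha^2},
\]
where the final inequality is the standard $\cosh\alpha - 1 \geq \alpha^2/2$ from the Taylor series.

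Applying this with $\alpha = \eeta$ and $\alpha = k\eeta$ gives
\[
\abs{L_k(r,t;e^{-z})} \leq \frac{1}{\eeta^2} + \frac{k}{(k\eeta)^2} = \frac{1}{\eeta^2}\left(1 + \frac{1}{k}\right) \leq \frac{3}{2\eeta^2},
\]
using $k \geq 2$, which is strictly stronger than the claimed $\tfrac{3.1}{\eeta^2}$. There is no real obstacle in this argument; the only care required is verifying that the reverse triangle inequality applies (which needs $\eeta > 0$). This particular route in fact does not use the stated hypothesis $\eeta < 1/k$, and I suspect the authors invoke a slightly coarser intermediate estimate (for instance, bounding $(1-e^{-\alpha})$ using a first-order Taylor expansion valid only for $\alpha \leq 1$) that forces them to impose $\eeta < 1/k$ and yields the looser constant $3.1$.
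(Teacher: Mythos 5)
Your proof is correct and follows essentially the same route as the paper: both apply the triangle inequality, enlarge the sum to all $m \geq 1$, and reduce to bounding $\sum_{m\geq 1}\frac{1}{e^{m\alpha}-1}$ by $\frac{e^{-\alpha}}{(1-e^{-\alpha})^2}$ (your telescoping bound $(e^{\alpha}-1)(1-e^{-\alpha}) = e^{\alpha}(1-e^{-\alpha})^2$ gives exactly the same intermediate quantity that the paper reaches via $\sum_m \sigma_0(m)\abs{q}^m \leq \sum_m m\abs{q}^m$). Your final step via $\cosh\alpha - 1 \geq \alpha^2/2$ is slightly sharper than the paper's $1-e^{-\alpha}\geq \alpha e^{-\alpha}$, which is indeed precisely why the paper needs $\eeta < 1/k$ to control the resulting factors $e^{\eeta}$ and $e^{k\eeta}$ and lands on the constant $3.1$ rather than your $3/2$.
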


\begin{proof}
    By the triangle inequality we immediately have that
    \[
        \abs{L_{r,t}(k;q)} \leq \sum_{m \geq 1} \frac{\abs{q}^m}{1 - \abs{q}^m} + k\sum_{m \geq 1} \frac{\abs{q}^{mk}}{1 - \abs{q}^{mk}}.
    \]
    We may then write that
    \begin{align*}
        \sum_{m \geq 1} \frac{\abs{q}^m}{1 - \abs{q}^m} &= \sum_{m \geq 1} \left(\abs{q}^m + \abs{q}^{2m} + \cdots\right) = \sum_{m \geq 1} \sigma_0(m) \abs{q}^m \leq \sum_{m \geq 1} m\abs{q}^m = \frac{\abs{q}}{(1 - \abs{q})^2},
    \end{align*}
    where $\sigma_0(m) \leq m$ is the number of positive divisors of $m$. Using that $\abs{q} = \abs{e^{-z}} = e^{-\eeta}$ we then have that
    \begin{align*}
        \abs{L_{r,t}(k;q)} &\leq \frac{\abs{q}}{(1 - \abs{q})^2} + \frac{k\abs{q}^k}{(1 - \abs{q}^k)^2} = \frac{e^{-\eeta}}{(1 - e^{-\eeta})^2} + \frac{ke^{-k\eeta}}{(1 - e^{-k\eeta})^2} \leq \frac{e^\eeta}{\eeta^2} + \frac{e^{k\eeta}}{k\eeta^2} < \frac{3.1}{\eeta^2}
    \end{align*}
    using the conditions that $k \geq 2, 0 < \eeta < 1/k \leq 1/2$ to bound $1/k$ as well as $e^\eeta,e^{k\eeta}$.
\end{proof}

\section{Proofs of the Main Results}\label{sec:proofs-main}

\subsection{Proof of \texorpdfstring{\Cref{thm:simple-asym}}{Theorem 1.1}}\label{subs:simple-asym}

We now apply the explicit asymptotics obtained in the previous section to prove \Cref{thm:simple-asym} using Wright's circle method. Specifically, we apply the variant stated in \Cref{thm:simple-circle} and proved in \cite{bringmannCircle}. To do so, we must estimate $\D_k(r,t;e^{-z}) = \xi_k(e^{-z})L_k(r,t;e^{-z})$ on both the major arc, where $z = \eeta + iy$ satisfies $\abs{y} < \Delta \eeta$, and on the minor arc, where $\Delta \eeta \leq \abs{y} \leq \pi$. Specifically, we need to show that
\begin{align}
    \D_k(r,t;e^{-z}) &= \frac{1}{\sqrt{k}}z^{-1}e^{\frac{\pi^2K}{6z}}\left(\frac{\log k}{t} + \frac{k-1}{2}\left(\frac{1}{2} - \frac{r}{t}\right)z + O_\Delta(\abs{z}^2)\right)\label{eq:asym-major}
\end{align}
and
\begin{align}
    \abs{\D_k(r,t;e^{-z})} &\ll_\Delta e^{\frac{1}{\Re(z)}\left(\frac{\pi^2K}{6} - \rho\right)} \label{eq:asym-minor}
\end{align}
on the major and minor arc, respectively, where $\rho > 0$. These correspond to conditions \ref{item:major-arc} and \ref{item:minor-arc} in \Cref{thm:simple-circle} respectively.

On the major arc, by \Cref{lemma:major-arc-lrt} and \Cref{lemma:major-arc-xi} we have that 
\begin{align*}
    L_k(r,t;e^{-z}) &= \frac{\log k}{tz} - \frac{k-1}{2}\left(\frac{r}{t} - \frac{1}{2}\right) + O(z)
\end{align*}
and
\begin{align*}
    \xi_k(e^{-z}) &= \frac{1}{\sqrt{k}}\exp\left(\frac{\pi^2K}{6z}\right)\left(1 + O\left(\exp\left(\frac{-4\pi^2}{kz}\right)\right)\right) = \frac{1}{\sqrt{k}}\exp\left(\frac{\pi^2K}{6z} + O(z)\right).
\end{align*}
Together, these imply \cref{eq:asym-major}. On the minor arc, \Cref{lemma:minor-arc-lrt,lemma:minor-arc-xi} imply \cref{eq:asym-minor}, as for sufficiently large $\de > 1$ (that is sufficiently large $\Delta$) we have $\frac{1}{2} + \frac{3}{\pi^2} + \frac{6}{\pi^2\de^2} < 1$. Thus we may apply \Cref{thm:simple-circle} to $\D_k(r,t;q)$, which gives the claimed asymptotic formula.

\subsection{Making \texorpdfstring{\Cref{thm:simple-asym}}{Theorem 1.1} Explicit}\label{subs:effective}

To prove \Cref{thm:finite-check}, we must first make \Cref{thm:simple-asym} explicit so that we can reduce the problem to a simple finite computer check. We follow Ngo and Rhoades' proof in \cite{ngo} of a variant on Wright's circle method applicable to a wide range of functions.

\begin{theorem}\label{thm:effective}
 	Let $\de > 1$ be some fixed real number. For any fixed integers $t,k \geq 2$, $1 \leq r \leq t$ and all integers $n > \frac{\de^2k^2t^2}{6}$, we have
 	\begin{align*}
 		%D_k(r,t;n) &= \frac{\log k}{t}V_0(k;n) - \frac{k-1}{2}B_1\left(\frac{r}{t}\right)V_1(k;n) + \frac{k^2-1}{24}B_2\left(\frac{r}{t}\right)V_2(k;n) - \frac{k^4 - 1}{2880}B_4\left(\frac{r}{t}\right)V_4(k;n) + \mathcal{E}_k(r,t;n) \\
 				   %&= \frac{\log k}{t}V_0(k;n) - \frac{k-1}{2}B_1\left(\frac{r}{t}\right)V_1(k;n) + \frac{k^2-1}{24}B_2\left(\frac{r}{t}\right)V_2(k;n) - \frac{k^4 - 1}{2880}B_4\left(\frac{r}{t}\right)V_4(k;n) + O_{t,k,\de}\left(n^{-3}e^{\pi\sqrt{\frac{2Kn}{3}}}  \right).
 		\abs{D_k(r,t;n) - \alpha_0^tV_0(k;n) - \alpha_1^{r,t}V_1(k;n) - \alpha_2^{r,t}(V_2(k;n) - \alpha_4V_4(k;n)}  \leq \mathcal{E}_k(t,\de;n),
 	\end{align*}
 	where the $V_s$ are certain contour integrals defined in \eqref{eq:vs}, and where
 	\begin{align*}
 		\mathcal{E}_k(t,\de;n) &\coloneqq C_1n\exp\left(\sqrt{n}\left(\frac{0.52}{\sqrt{K}} + C_1'(\de)\sqrt{K}\right)\right) + C_2(\de,k,t)\sqrt{n}\exp\left(\pi\sqrt{\frac{2Kn}{3}}\left(1 - C_2'(\de,k)\right)\right) \\
 		&\quad \quad + \frac{C_3(\de,k,t)}{n^3}\exp\left(\pi\sqrt{\frac{2Kn}{3}}\right),
 	\end{align*}
 	for constants $\alpha_j^{r,t},C_j,C_j'$ which are determined explicitly in \eqref{eq:alphas}, \eqref{eq:eff-e1}, \eqref{eq:eff-e2}, and \eqref{eq:eff-e3} within the proof below. Furthermore, if $\de > \sqrt{\frac{1.29K}{1.28K - 0.52}}$, then 
 	\[
 	    \Err_k(r,t;n) = O_{t,k,\de}\left(n^{-3}\exp\left(\pi\sqrt{\frac{2Kn}{3}}\right)\right).
 	\]
\end{theorem}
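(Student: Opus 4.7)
The plan is to execute Wright's circle method in explicit form, following the Ngo--Rhoades template used by Craig in \cite{craig}. Beginning from Cauchy's integral formula on the circle $\abs{q} = e^{-\eta}$ where $\eta = \pi\sqrt{K/(6n)}$ is the saddle point of $\frac{\pi^2K}{6z} + nz$, write
\[
    D_k(r,t;n) = \frac{1}{2\pi}\int_{-\pi}^{\pi} \xi_k(e^{-z})L_k(r,t;e^{-z})e^{nz}\,dy, \qquad z = \eta + iy,
\]
and split the contour at $\abs{y} = \Delta\eta$ into a major arc and a minor arc. The hypothesis $n > \delta^2 k^2 t^2/6$ is chosen precisely so that the entire major arc lies inside the regions of validity of both \Cref{lemma:major-arc-xi} and \Cref{lemma:major-arc-lrt}.

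On the major arc, I would substitute the expansions from those two lemmas and multiply, Taylor expanding the factor $\exp(z(k-1)/24)$ inside $\Phi_k(z)$ and collecting powers of $z$. The product takes the form
\[
    \xi_k(e^{-z})L_k(r,t;e^{-z}) = \frac{1}{\sqrt{k}}\exp\!\left(\frac{\pi^2K}{6z}\right)\sum_{s}\alpha_s^{r,t}z^{-s} + R(z),
\]
where the exponents $s$ and coefficients $\alpha_s^{r,t}$ reproduce the linear combination in the statement, and the remainder $R(z)$ is controlled by pairing the explicit error terms of the two major-arc lemmas against the explicit bound on $\abs{\Phi_k(z)e^{nz}}$. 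Defining $V_s(k;n)$ as the integral of $\frac{1}{\sqrt{k}}z^{-s}\exp(\pi^2K/(6z) + nz)$ along the truncated contour recovers the main term, while the integral of $R$ contributes the middle piece $C_2(\delta,k,t)\sqrt{n}\exp(\cdots)$ of $\mathcal{E}_k$. Under the substitution $z = w/n$, each $V_s(k;n)$ becomes a Bessel-contour integral along a path of the type $\mathcal{D}_0^{\mu,\Delta}$ with $\mu = \pi\sqrt{Kn/6}$ and $x = \pi\sqrt{2Kn/3}$, so \Cref{lemma:modified-bessel-estimate} identifies it with a constant multiple of $I_\nu(\pi\sqrt{2Kn/3})$ up to a tail that decays like a negative power of $n$ times $\exp(\pi\sqrt{2Kn/3})$; this tail is what produces the $C_3(\delta,k,t) n^{-3}\exp(\pi\sqrt{2Kn/3})$ piece of $\mathcal{E}_k$.

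On the minor arc, \Cref{lemma:minor-arc-xi} and \Cref{lemma:minor-arc-lrt} combine to give
\[
    \abs{\xi_k(e^{-z})L_k(r,t;e^{-z})e^{nz}} \leq \frac{9.3}{\eta^2}\exp\!\left(\frac{\pi^2K}{6\eta}\!\left(\tfrac{1}{2} + \tfrac{3}{\pi^2} + \tfrac{6}{\pi^2\delta^2}\right) + n\eta\right).
\]
Substituting $\eta = \pi\sqrt{K/(6n)}$ and absorbing the polynomial factor $1/\eta^2$ (which is of size $n/K$) into $C_1$ converts this into the first term $C_1 n\exp\!\big(\sqrt{n}(0.52/\sqrt{K} + C_1'(\delta)\sqrt{K})\big)$ of $\mathcal{E}_k$, where $C_1'(\delta)$ arises from the $6/(\pi^2\delta^2)$ piece of the minor-arc exponent and is decreasing in $\delta$. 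The stated threshold $\delta > \sqrt{1.29K/(1.28K - 0.52)}$ is then precisely the condition that forces $0.52/\sqrt{K} + C_1'(\delta)\sqrt{K} < \pi\sqrt{2K/3}$, which makes the minor-arc contribution exponentially subdominant to the main Bessel term and leaves the Bessel-tail term $C_3 n^{-3}\exp(\pi\sqrt{2Kn/3})$ in charge of $\mathcal{E}_k$. The main obstacle will be the bookkeeping of explicit constants: carrying them through the product of the two major-arc expansions, through \Cref{lemma:modified-bessel-estimate}, and finally solving the resulting inequality to isolate the exact $\delta$-threshold at which the minor-arc exponent drops below $\pi\sqrt{2K/3}$. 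No new analytic ingredient beyond the estimates of \Cref{sec:estimates} is needed; the remaining work is careful, entirely elementary bounding.
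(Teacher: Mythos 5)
Your overall skeleton---Cauchy's formula at the saddle $\eta = \pi\sqrt{K/(6n)}$, a split at $\abs{y} = \Delta\eta$, the minor-arc bound producing the $C_1 n\exp(\cdot)$ term, and the observation that the $\de$-threshold is exactly the condition making the minor-arc exponent drop below $\pi\sqrt{2K/3}$---matches the paper. But your accounting of the other two pieces of $\mathcal{E}_k$ is wrong in a way that would derail the proof. First, \Cref{lemma:modified-bessel-estimate} plays no role in this theorem: the statement approximates $D_k(r,t;n)$ by the truncated contour integrals $V_s(k;n)$ themselves (as defined in \eqref{eq:vs}, which keep the factor $e^{z(k-1)/24}$ rather than Taylor-expanding it), so no Bessel-tail error enters $\mathcal{E}_k$ at all; the passage from $V_s$ to $I_{-s}$ is deferred to the proof of \Cref{thm:finite-check}. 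Moreover, even if you did make that passage here, the tail in \Cref{lemma:modified-bessel-estimate} is of size $\exp\left(\frac{\pi}{2\Delta^2}\sqrt{\frac{2Kn}{3}}\right)$ times a power of $x$, which is exponentially smaller than $n^{-3}\exp\left(\pi\sqrt{\frac{2Kn}{3}}\right)$, so it cannot be the source of the $C_3$ term.

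The actual source of the $C_3 n^{-3}\exp(\pi\sqrt{2Kn/3})$ term is the one error you lumped into the middle piece: the quintic truncation error $O(\abs{tz}^5)$ in the Euler--Maclaurin expansion of $L_k$ from \Cref{lemma:major-arc-lrt}, multiplied by $\abs{\Phi_k(z)e^{nz}} \asymp \exp(\pi\sqrt{2Kn/3})$ and by the contour length $O(\eta)$, giving $\eta^6 \asymp n^{-3}$. This error enjoys no exponential savings, so it cannot be absorbed into $C_2\sqrt{n}\exp\left(\pi\sqrt{\frac{2Kn}{3}}(1 - C_2')\right)$; conversely, the $C_2$ term comes exclusively from the other half of your remainder $R(z)$, namely $(\xi_k - \Phi_k)\cdot L_k$, where \Cref{lemma:major-arc-xi} supplies the factor $\abs{e^{-4\pi^2/kz}}$ responsible for the savings $C_2' = \frac{12}{\de^2(k-1)}$ (paired with the crude bound $\abs{L_k} \ll \abs{tz}^{-1}$ from \Cref{cor:major-arc-lrt-absolute}). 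The paper makes this split explicit as $E_1 + E_2 + E_3$ with $E_2 = \int_{\mathcal{C}_1} L_k(\xi_k - \Phi_k)q^{-n-1}$ and $E_3 = \int_{\mathcal{C}_1}(L_k - \alpha_0 z^{-1} - \alpha_1 - \alpha_2 z - \alpha_4 z^3)\Phi_k q^{-n-1}$; you need to keep these two major-arc errors separate, since they have genuinely different orders of magnitude and populate different summands of $\mathcal{E}_k$.
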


\begin{proof}
    For convenience, we let $K \coloneqq 1 - 1/k$. Now let $\mathcal{C}$ be the circle in the complex $q$-plane with center $0$ and radius $e^{-\eeta}$ where $\eeta \coloneqq \pi \sqrt{\frac{K}{6n}}$. By Cauchy's formula and \Cref{lemma:d-gen-formula}, we have
    \[
        D_k(r,t;n) = \frac{1}{2\pi i}\int_{\mathcal{C}} \frac{\mathcal{D}_{r,t}(k;q)}{q^{n+1}} \d q = \frac{1}{2\pi i}\int_{\mathcal{C}} \frac{L_{r,t}(k;q)\xi_k(q)}{q^{n+1}} \d q.
    \]
    Throughout, we fix $q = e^{-z}$ with $z = \eeta + iy$. Let $\mathcal{L}$ be the line segment in $z$-plane corresponding to $\mathcal{C}$. Let $\mathcal{C}_1$ be the major arc of $\mathcal{C}$ wherein the corresponding $z = \eeta + iy \in \mathcal{L}_1$ satisfies $0 \leq \abs{y} < \Delta \eeta$. Likewise we call $\mathcal{C}_2 \coloneqq \mathcal{C} \setminus \mathcal{C}_1$ the minor arc. We also assume $\eeta < \frac{\pi}{kt\de}$, which is equivalent to $n > \frac{\de^2k^2t^2}{6}$.
    
    Define for $s \geq 0$ the integrals
    \begin{align}
        V_s(k;n) &\coloneqq \frac{1}{2\pi i }\int_{\mathcal{C}_1} \frac{z^{s-1}}{q^{n+1}} \Phi_k(z) \d q = \frac{1}{2\pi i\sqrt{k}} \int_{\eeta - i\Delta\eeta}^{\eeta + i\Delta\eeta} z^{s-1}\exp\left(\frac{\pi^2K}{6z} + \frac{z(k-1)}{24} + nz\right) \d z. \label{eq:vs} 
    \end{align}
    By changing variables to $z$ and keeping track of orientation, we will see in \Cref{subs:finite-check} that this is related to the main term in the Bessel function
    \[
        I_{-s}\left(\pi\sqrt{\frac{2K}{3}\left(n + \frac{k-1}{24}\right)}\right).
    \]
    Thus it will suffice to approximate $D_k(r,t;n)$ in terms of the integrals $V_s(k;n)$. We have the decomposition
    \[
        D_k(r,t;n) - \alpha_0^t(k)V_0(k;n) - \alpha_1^{r,t}(k)V_1(k;n) - \alpha_2^{r,t}(k)V_2(k;n) - \alpha_4^{r,t}(k)V_4(k;n) = E_1 + E_2 + E_3,
    \]
    where the constants $\alpha_j^{r,t}$ are defined by
    \begin{align}
        \alpha_0^t(k) &\coloneqq \frac{\log k}{t}, && \alpha_1^{r,t}(k) \coloneqq -\frac{k - 1}{2}B_1\left(\frac{r}{t}\right), \notag \\
        \alpha_2^{r,t}(k) &\coloneqq \frac{k^2 - 1}{24}B_2\left(\frac{r}{t}\right), && \alpha_4^{r,t}(k) \coloneqq -\frac{k^4 - 1}{2880}B_4\left(\frac{r}{t}\right), \label{eq:alphas}
    \end{align}
    and the integrals $E_1,E_2,E_3$ are defined by
    \begin{align*}
        E_1 &\coloneqq \frac{1}{2 \pi i}\int_{\mathcal{C}_2} \frac{L_{r,t}(k;q)\xi_k(q)}{q^{n+1}} \d q, && E_2 \coloneqq \frac{1}{2 \pi i}\int_{\mathcal{C}_1} \frac{L_{r,t}(k;q)(\xi_k(q) - \Phi_k(z))}{q^{n+1}} \d q ,
        \end{align*}
    and
    \begin{align*}
        E_3 &\coloneqq \frac{1}{2\pi i}\int_{\mathcal{C}_1} \frac{(L_{r,t}(k;q) - \alpha_0z^{-1} - \alpha_1 - \alpha_2z - \alpha_4z^3)\Phi_k(z)}{q^{n+1}} \d q.
    \end{align*}
    For convenience, we shall suppress the dependence of $\alpha_s^{r,t}(k)$ on $r,t,k$ where these are are clear from context. We now bound the error terms $E_1, E_2, E_3$.
    
    We first estimate $\abs{E_1}$. To do so, we first rewrite $\abs{q^{-n}}$ in terms of $n$ as $\abs{q^{-n}} = \exp\left(\pi \sqrt{\frac{Kn}{6}}\right)$. Now, we bound $\abs{E_1}$ using the max-length bound along with \Cref{lemma:minor-arc-lrt,lemma:minor-arc-xi}, which together imply
    \begin{align}
        \abs{E_1} &\leq \abs{L_{r,t}(k;q)}\abs{\xi_k(q)}\abs{\exp(nz)} \leq \frac{9.3}{\eeta^2}\exp\left(\frac{\pi^2}{24\eeta}\left(1 + \frac{6}{\pi^2} + \frac{4K}{\de^2}\right) + \frac{\sqrt{K}\pi \sqrt{n}}{\sqrt{6}}\right) \notag \\
        &\leq 12n\exp\left(\sqrt{n}\left(\frac{0.52}{\sqrt{K}} + \pi\sqrt{\frac{K}{6}} + \frac{1.29\sqrt{K}}{\delta^2}\right)\right) \eqqcolon \Err^1_k(t,\de;n). \label{eq:eff-e1}
        %^
        %\leq 12n\exp\left(2.53\sqrt{K}\sqrt{n}\right).
    \end{align}
    We now estimate $E_2$. Similar to above, we switch to the $z$ plane and use the max-length bound, \Cref{cor:major-arc-lrt-absolute}, and \Cref{lemma:major-arc-xi}. Define
    \[
        A_{t, k,\de} \coloneqq \frac{41 + \log k}{t} + \frac{1.94\de^7}{30240t} < \frac{41 + \log k + 0.00007\de^7}{t} \eqqcolon A_{t,k,\de}^\ast.
    \]
    Thus, we have
    \begin{align*}
        \abs{E_2} &\leq
        \frac{1}{2\pi} \abs{L_{r, t}(k;q)}\abs{\xi_k(q) - \Phi_k(z)} \abs{e^{nz}}\cdot 2\Delta \eeta \\
        &\leq \frac{7\Delta\eeta}{\pi} \abs{\frac{A_{t, k,\de}}{z}}\abs{\Phi_k(z)}\cdot \abs{\exp\left(-\frac{4\pi^2}{kz}\right)}e^{n\eeta} \\
        %&\leq \frac{7\Delta\sqrt{n}}{4\sqrt{K}}A_{t,k,\de}^\ast \exp\lr{\frac{\pi^2}{6\eeta}K + n\eeta + \frac{\eeta (k-1)}{24} - \frac{4\pi^2}{k\delta^2\eeta}} \\
        &\leq \frac{7\Delta\sqrt{n}}{4\sqrt{K}}A_{t,k,\de}^\ast\exp\lr{\pi\sqrt{\frac{2Kn}{3}}\lr{1 - \frac{12}{\de^2(k-1)}}} \exp\lr{{\frac{\eeta(k-1)}{24}}}.
    \end{align*}
    From the bound $\eeta < \frac{\pi}{kt\delta}$, we have
    \[
    \frac{\eeta(k-1)}{24} < \frac{\pi}{t\delta 24} < \frac{\pi}{48\cdot \de} < \frac{0.066}{\de} \leq 0.066,
    \]
    which implies
    \begin{equation}
    \abs{E_2} \leq \frac{2\Delta\sqrt{n}}{\sqrt{K}}\left(\frac{41 + \log k + 7\cdot 10^{-5}\de^7}{t}\right)\exp\lr{\pi\sqrt{\frac{2Kn}{3}}\lr{1 - \frac{12}{\de^2(k-1)}}} \label{eq:eff-e2}.
    \end{equation}
    To bound $E_3$, first we see that applying \Cref{lemma:major-arc-lrt} on $\mathcal{C}_1$ with some computation yields
    \begin{align*}
        \abs{L_{r,t}(k;q) - \alpha_0z^{-1} - \alpha_1 - \alpha_2z - \alpha_4z^3}  \leq \left(\frac{6.68\de^7k^6}{10^8}+ 0.0413k^6\right)t^5\de^5\eeta^5.
        %&\leq \left(\frac{6.68\de^7k^6}{10^8}+ 0.021k^7\right)t^5\abs{z}^5
    \end{align*}
    We also have that
    \begin{align*}
        \abs{\Phi_k(z)} &= \frac{1}{\sqrt{k}}\exp\left(\frac{\pi^2K}{6}\Re(1/z) + \frac{\eeta(k-1)}{24}\right) \leq  \frac{1.07}{\sqrt{k}}\exp\left(\pi\sqrt{\frac{Kn}{6}}\right).
        %\frac{1}{\sqrt{k}}\exp\left(\frac{\pi^2K}{6\eeta} + \frac{(k-1)\eeta}{24}\right) \leq
    \end{align*}
    Therefore, we may use the max-length bound on $\abs{E_3}$ to see that
    \begin{align}
        \abs{E_3} &\leq \frac{4.8\eeta}{2\pi}\abs{L_{r,t}(k;q) - \alpha_0z^{-1} - \alpha_1 - \alpha_2z - \alpha_4z^3}\abs{\Phi_k(z)}\abs{\exp(nz)} \notag \\
        %&\leq \left(\frac{5.47\de^7k^6}{10^8}+ 0.018k^7\right)\frac{t^5\de^5\eeta^6}{\sqrt{k}}\exp\left(\pi\sqrt{\frac{2Kn}{3}}\right) \notag \\
        &\leq \left(\frac{2.44\de^7k^6}{10^7}+ 0.151k^6\right)\frac{t^5K^3\de^5}{n^3\sqrt{k}}\exp\left(\pi\sqrt{\frac{2Kn}{3}}\right) \eqqcolon \Err^3_k(t,\de;n). \label{eq:eff-e3}
    \end{align}
    We have therefore shown that
    \[
        \abs{D_k(r,t;n) - \alpha_0V_0(k;n) - \alpha_1V_1(k;n) - \alpha_2V_2(k;n) - \alpha_4V_4(k;n)} \leq \Err_k(t,\de;n),
    \]
    where
    \begin{align*}
        \Err_k(t,\de;n) \coloneqq \Err^1_k(t,\de;n) + \Err^2_k(t,\de;n) + \Err^3_k(t,\de;n).
    \end{align*}
    The asymptotic for $\Err_k(t,\de;n)$ follows by choosing $\de$ large enough so that $\Err^3_k$ dominates $\Err^1_k$ and $\Err^2_k$.
\end{proof}

\vspace{-0.2in} % FIX SPACING

\subsection{Proof of \texorpdfstring{\Cref{thm:finite-check}}{Theorem 1.3}}\label{subs:finite-check}

We now use \Cref{thm:effective} to prove \Cref{thm:finite-check} by reducing the problem to a finite computer search. Throughout, we assume that $\de > \sqrt{\frac{1.29K}{1.28K - 0.52}}$ so that the error term from \Cref{thm:effective} will in fact be smaller than the main term. To this end, define
\[
    \de_{\min}(k) \coloneqq \sqrt{\frac{1.29K}{1.28K - 0.52}} > \sqrt{\frac{1.29}{0.76}} > 1.3.
\]
Note the bound from below follows because this function of $K$ is maximized by taking $K = 1$ in the region $k \geq 2$. 

Before we apply \Cref{thm:effective}, we must relate $V_s(k;n)$ to $\widetilde{I}_{-s}(x)$. For convenience, define
\begin{align*}
    u &\coloneqq z\left(n + \frac{k-1}{24}\right), && x \coloneqq \pi\sqrt{\frac{2K}{3}\left(n + \frac{k - 1}{24}\right)} && \mu \coloneqq \eeta\left(n + \frac{k-1}{24}\right).
\end{align*}
We now change variables from $z$ to $u$ in the $V_s(k;n)$ integral to obtain the main term of a Bessel function evaluated at $x$. In particular, we have that
\begin{align*}
    V_s(k;n) &= \frac{1}{2\pi i\sqrt{k}}\int_{\eeta - i\Delta\eeta}^{\eeta + i\Delta\eeta} z^{s - 1}\exp\left(\frac{x^2}{4u} + u\right) \d z = \frac{\left(n + \frac{k-1}{24}\right)^{-s}}{2\pi i\sqrt{k}}\int_{\mu - i\Delta\mu}^{\mu + i\Delta\mu} u^{s-1}\exp\left(\frac{x^2}{4u} + u\right) \d u \\
           &= \frac{\pi^{s-1}\left(\sqrt{\frac{2K}{3}}\right)^{s/2}\left(n + \frac{k-1}{24}\right)^{-s/2}}{\sqrt{k}}\widetilde{I}^{\mu,\Delta}_{-s}(x).
\end{align*}
From here, we use our explicit estimates to show $D_k(r,t;n) > D_k(s,t;n)$ for some integers $n \geq 0$, $1 \leq r < s \leq t$. Because $D_k(r,t;n) -D_k(s,t;n) = \sum_{j=r}^{s-1} D_k(j,t;n) - D_k(j+1,t;n)$, we need only show the inequality $D_k(r,t;n) > D_k(r+1,t;n)$ for each $1 \leq  r < t$. For brevity, define $\beta^{r,t}_j(k) \coloneqq \alpha^{r,t}_j(k) - \alpha^{r+1,t}_j(k)$ and
\begin{align*}
    M_{r,t}(k;n) \coloneqq \alpha_0V_0(k;n) + \alpha_{1}^rV_1(k;n) + \alpha_2^rV_2(k;n) + \alpha_r^rV_4(k;n).
\end{align*}
Applying \Cref{thm:effective} to $D_k(r,t;n)$ and $D_{r+1,t}(k;n)$ it suffices to show that
\[
M_{r,t}(k;n) - M_{r+1,t}(k;n) \geq 2\Err_k(t,\de;n),
\]
which is equivalent to
\begin{equation}
    \beta_1^{r,t}(k)V_1(k;n) + \beta_2^{r,t}V_2(k;n) + \beta_4^{r,t}V_4(k;n) > 2\Err_k(t,\de;n). \label{eq:needed-v-form}
\end{equation}
We thus need to evaluate $\beta_j^{r,t}(k)$ for $j = 1,2,4$. Using that $B_1(x) = x - \frac 1 2, B_2(x) = x^2 - x + \frac 1 6,$ $B_4(x) = x^4 - 2x^3 + x^2 - \frac{1}{30},$ and $1 \leq r < t$, we have
\begin{align*}
    \beta_1^{r,t}(k) &= \frac{k-1}{2t}, && \beta_2^{r,t}(k) = \frac{(k^2 - 1)(t - 2r - 1)}{24t^2},
\end{align*}
and
\begin{align*}
    \beta_4^{r,t}(k) &= \frac{(k^4 - 1)(4r^3 - 6r^2(t-1) + (t-1)^2 + 2r(t^2 - 3t + 2))}{2880t^4}.
\end{align*}
From \Cref{lemma:modified-bessel-estimate}, we have
\begin{align*}
    \abs{I_{-s}(x) - \widetilde{I}_{-s}(x)}
    &\leq \frac{1}{\pi}\left(\frac{x}{2}\right)^{-s} \exp\left(\frac{x^2}{4\mu\Delta^2}\right) \int_0^\infty \left(\delta \mu + u\right)^{s - 1} \exp(-u) \d u.
\end{align*}
Let $s$ be a positive integer. Using partial integration, we see that
\begin{align*}
    \int_0^\infty (c+u)^{s-1}\exp(-u) \d u %&= \left[-(c+u)^{s-1}e^{-u}\right]_0^\infty + (s-1)\int_0^\infty (c+u)^{s-2}e^{-u}\d u \\
    &= c^{s-1} + (s-1)\int_0^\infty(c+u)^{s-2}e^{-u}\d u
\end{align*}
for any constant $c$. Applying this repeatedly, notice that this iterative process terminates when $s = 1$, and thus we have that
\[
\int_0^\infty \left(\delta \mu + u\right)^{s - 1} \exp(-u) \d u = (\delta\mu)^{s-1}+(s-1)(\delta\mu)^{s-2}+\dots + (s-1)!.
\]
Combining the following equality
\[
\frac{\delta\mu}{x} = \frac{\delta\frac{\pi}{2}\sqrt{\frac{2K}{3n}}(n+\frac{k-1}{24})}{\pi\sqrt{\frac{2K}{3}(n+\frac{k-1}{24})}} = \frac{\delta}2 \sqrt{\frac{n+\frac{k-1}{24}}{n}} =
\frac{\delta}2 \sqrt{1 + \frac{k-1}{24n}}
\]
and the bound $\eeta < \frac{\pi}{kt\delta}$, which implies
\[
n > \frac{k(k-1)t^2\delta^2}{6},
\]
we have
\[
\frac{\delta\mu}{x} < \frac{\delta}{2}\sqrt{1 + \frac{1}{16k\delta^2}} < 0.6\delta.
\]
Thus, because $x > 3$ for $n \geq 4$, we have
\begin{align*}
    \left(\frac x 2\right)^{-s}\int_0^\infty (c+u)^{s-1}\exp(-u) \d u &=
    \left(\frac{x}{2}\right)^{-s} \int_0^\infty \left(\delta \mu + u\right)^{s - 1} \exp(-u) \d u \\
    %&= \left(\frac{x}{2}\right)^{-s}  \lr{(\delta\mu)^{s-1} + (s-1)(\delta\mu)^{s-2}+\dots+(s-1)!}\\
    &<
    \frac{2^s}{x}\lr{(0.6\delta)^{s-1}+(s-1)\frac{(0.6\delta)^{s-2}}{3} +\dots +(s-1)!\frac{1}{3^{s-1}}}.
\end{align*}
We may then explicitly evaluate $\exp\left(\frac{x^2}{4\mu\Delta^2}\right)$ as
\[
    \exp\left(\frac{x^2}{4\mu\Delta^2}\right) = \exp\left(\frac{\pi}{2\Delta^2}\sqrt{\frac{2Kn}{3}}\right).
\]
Combining these bounds and the computation above with \Cref{lemma:modified-bessel-estimate} yields that for $s=1,2,4$ we have
\begin{align*}
    \abs{I_{-1}(x) - \widetilde{I}_{-1}(x)} &\leq
    \frac{2}{\pi x}\exp\left(\frac{\pi}{2\Delta^2}\sqrt{\frac{2Kn}{3}}\right) \eqqcolon \mathcal{I}_{-1}(n,\de), \\
    \abs{I_{-2}(x) - \widetilde{I}_{-2}(x)} &\leq
    \frac{4}{\pi x}\exp\left(\frac{\pi}{2\Delta^2}\sqrt{\frac{2Kn}{3}}\right)(0.6\delta + \frac{0.6^2\delta^2}{3}) \eqqcolon \mathcal{I}_{-2}(n,\de),
\end{align*}
and
\begin{align*}
    \abs{I_{-4}(x) - \widetilde{I}_{-4}(x)} &\leq
    \frac{16}{\pi x}\exp\left(\frac{\pi}{2\Delta^2}\sqrt{\frac{2Kn}{3}}\right)((0.6\delta)^3 + 3\frac{(0.6\delta)^2}{3}+ 6\frac{0.6\delta}{9} + 6\frac{1}{27}) \eqqcolon \mathcal{I}_{-4}(n,\de).
\end{align*}
Now define
\begin{align*}
    W_s(k;n) \coloneqq \frac{\pi^{s-1}\left(\sqrt{\frac{2K}{3}}\right)^{s/2}\left(n + \frac{k-1}{24}\right)^{-s/2}}{\sqrt{k}} \geq 0,
\end{align*}
so that $V_s(k;n) = W_s(n)\widetilde{I}_{-s}(x)$. Then to prove \cref{eq:needed-v-form} it suffices to show
\begin{align}
    &\beta_1 W_1(k;n)I_{-1}(x) + \beta_2W_2(k;n)I_{-2}(x) + \beta_4W_4(k;n)I_{-4}(x) \notag \\
    &> 2\Err_k(t,\de;n) + \beta_1W_1(k;n)\mathcal{I}_{-1}(n,\de) + \beta_2W_2(k;n)\mathcal{I}_{-2}(n,\de) + \beta_4W_4(k;n)\mathcal{I}_{-4}(n,\de) \label{eq:needed-i-form}.
\end{align}
An analysis of the relevant asymptotics for Bessel functions yields that this holds for sufficiently large $n$, as \cite[10.40.1]{nist} indicates that $I_{-s}(x)$ has main term $(4s^2-1)^2\frac{e^x}{\sqrt{2\pi x}}$ for any $s$, and since $W_s(k;n)$ is of order $n^{-s/2}$, we see that $W_1(k;n)I_{-1}(x)$ dominates the $I$-Bessel functions above. The remaining terms are error terms asymptotically because when $\de \geq \de_{\min}(k)$ we have that
\[
    \Err_k(t,\de;n) = O\left(n^{-3}\exp\left(\pi\sqrt{\frac{2Kn}{3}}\right)\right).
\]
Furthermore, since $\de > 1.3$, we have that $\Delta > \sqrt{\frac{1}{2}}$, so that
\begin{align*}
    W_s(k;n)\mathcal{I}_{-s}(n,\de) &= O\left(x^{-1}\exp\left(\frac{\pi}{2\Delta^2}\sqrt{\frac{2Kn}{3}}\right)\right)
\end{align*}
is an error term. Finally, note that $\beta_1$ is positive, and therefore the inequality holds for sufficiently large $n$.

In summary, we have shown in order to show that $D_k(r,t;n) > D_k(s,t;n)$ for all $0 < r < s \leq t$ for a fixed value of $n$, it suffices to consider the case $s = r + 1$ and show that the inequality \eqref{eq:needed-i-form} holds for all $0 < r < t$, so long as $n > \max\left\{\frac{\de^2k^2t^2}{6},4\right\}$ (note the maximum is irrelevant unless $k=t=2$). In light of this, we define the integer $N_k(t,\de)$ as the smallest positive integer such that both $N_k(t,\de) > \frac{\de^2k^2t^2}{6}$ the inequality \eqref{eq:needed-i-form} is true for all $n > N_k(t,\de)$ and for all $1 \leq r \leq t - 1$. We may then define\footnote{In \cite{craig}, Craig uses the notation $N_t(n)$ for $N_2(t,\sqrt{101})$} $N_k(t) \coloneqq \min\limits_{\de > \de_{\min}(k)} N_k(t,\de)$. The table in \Cref{fig:finite-check-numerics} gives upper bounds on $N_k(t)$ for $2 \leq t \leq 10$ and $2 \leq k \leq 10$ which have been computed with the aid of a computer.

\begin{figure}[ht]
    \centering
    \begin{tabular}{|c|c|c|c|c|c|c|c|c|c|}
    \hline $t$ & 2 & 3 & 4 & 5 & 6 & 7 & 8 & 9 & 10 \\
    \hline
    $N_2(t) \leq $ & 27633 \linecomment{$\de = 8.6$} & 31342 \linecomment{$\de = 7.25$} & 35554 \linecomment{$\de = 6.3$} & 40711 \linecomment{$\de = 5.6$} & 47067 \linecomment{$\de = 5.05$} & 54736 \linecomment{$\de = 4.6$} & 63726 \linecomment{$\de = 4.3$} & 74091 \linecomment{$\de = 4$} & 85823 \linecomment{$\de = 3.8$}\\
    \hline
    $N_3(t) \leq$ & 4718 \linecomment{$\de = 3.95$} & 7140 \linecomment{$\de = 3.05$} & 10540 \linecomment{$\de = 2.65$} & 15051 \linecomment{$\de = 2.4$} & 20820 \linecomment{$\de =2.25$} & 28031 \linecomment{$\de=2.15$} & 36415 \linecomment{$\de = 2.05$} & 46604 \linecomment{$\de = 2$} & 58354 \linecomment{$\de = 1.95$} \\
    \hline
    $N_4(t) \leq$ & 4130 \linecomment{$\de = 2.8$} & 7430 \linecomment{$\de = 2.3$} & 12294 \linecomment{$\de = 2.05$} & 19131 \linecomment{$\de = 1.9$} &  27862 \linecomment{$\de = 1.85$} & 39330 \linecomment{$\de=1.8$} & 52735 \linecomment{$\de = 1.75$} & 68212 \linecomment{$\de = 1.7$} & 89465 \linecomment{$\de = 1.7$} \\
    \hline
    $N_5(t) \leq$ & 4624 \linecomment{$\de = 2.35$} & 9243 \linecomment{$\de = 2$} & 16234 \linecomment{$\de = 1.82$} & 26051 \linecomment{$\de =1.73$} & 39085 \linecomment{$\de=1.67$} & 55653 \linecomment{$\de = 1.63$} & 76162 \linecomment{$\de = 1.6$} & 100588 \linecomment{$\de = 1.58$} & 130025 \linecomment{$\de = 1.56$} \\
    \hline
    $N_6(t) \leq$ & 5496 \linecomment{$\de = 2.08$} & 11760 \linecomment{$\de = 1.82$} & 21500 \linecomment{$\de = 1.7$} & 35345 \linecomment{$\de =1.63$} & 53843 \linecomment{$\de=1.59$} & 77601 \linecomment{$\de = 1.56$} & 107127 \linecomment{$\de = 1.54$} & 142570 \linecomment{$\de = 1.52$} & 185585 \linecomment{$\de = 1.5$} \\
    \hline
    $N_7(t) \leq$ & 6636 \linecomment{$\de = 1.93$} & 14904 \linecomment{$\de = 1.72$} & 27969 \linecomment{$\de = 1.63$} & 46672 \linecomment{$\de =1.57$} & 72070 \linecomment{$\de=1.51$} & 104117 \linecomment{$\de = 1.51$} & 144528 \linecomment{$\de = 1.49$} & 193850 \linecomment{$\de = 1.48$} & 252679 \linecomment{$\de = 1.47$} \\
    \hline
    $N_8(t) \leq$ & 7997 \linecomment{$\de = 1.83$} & 18611 \linecomment{$\de = 1.66$} & 35622 \linecomment{$\de = 1.58$} & 60084 \linecomment{$\de =1.53$} & 93038 \linecomment{$\de=1.5$} & 135954 \linecomment{$\de = 1.48$} & 189230 \linecomment{$\de = 1.46$} & 254173 \linecomment{$\de = 1.45$} & 331681 \linecomment{$\de = 1.44$} \\
    \hline
    $N_9(t) \leq$ & 9567 \linecomment{$\de = 1.76$} & 22870 \linecomment{$\de = 1.61$} & 44355 \linecomment{$\de = 1.54$} & 75590 \linecomment{$\de =1.5$} & 117868 \linecomment{$\de=1.47$} & 173110 \linecomment{$\de = 1.45$} & 241312 \linecomment{$\de = 1.44$} & 324969 \linecomment{$\de = 1.43$} & 425159 \linecomment{$\de = 1.42$} \\
    \hline
    $N_{10}(t) \leq$ & 11333 \linecomment{$\de = 1.7$} & 27682 \linecomment{$\de = 1.57$} & 54278 \linecomment{$\de = 1.51$} & 92866 \linecomment{$\de =1.475$} & 145904 \linecomment{$\de=1.45$} & 214051 \linecomment{$\de = 1.435$} & 300172 \linecomment{$\de = 1.425$} & 405179 \linecomment{$\de = 1.415$} & 531096 \linecomment{$\de = 1.41$} \\
    \hline
    \end{tabular}
    \caption{Numerics for \Cref{thm:finite-check}}
    \label{fig:finite-check-numerics}
\end{figure}
%\footnotetext{For $k =5$, Mathematica has difficulty keeping enough precision to compute the values $N_5(t)$, so these are overestimates to account for that limitation.}

Therefore, in order to show \Cref{thm:finite-check}, it suffices to compute $D_k(r,t;n) - D_k(r+1,t;n)$ for $n \leq N_k(t)$ with the aid of a computer and determine all possible counterexamples which arise from these cases. All such counterexamples for $D_k(r,t;n) \geq D_k(r+1,t;n)$ satisfy $k=2,n \leq 8$, and similarly all such counterexamples for $D_k(r,t;n) > D_k(r+1,t;n)$ satisfy $n \leq 16$. This completes the proof.

\uspunctuation
\printbibliography

\end{document}